\newcommand{\dist}{{\mathop{\rm dist}}}
\newcommand{\cN}{{\cal N}}
\newcommand{\cI}{{\cal I}}
\newcommand{\cJ}{{\cal J}}
\newcommand{\cK}{{\cal K}}
\newcommand{\cF}{{\cal F}}
\newcommand{\cB}{{\cal B}}
\newcommand{\cD}{{\cal D}}
\newcommand{\cS}{{\cal S}}
\newcommand{\mN}{{\mathbb N}}
 \journalname{Mathematical Programming}
\begin{document}
\title{Necessary Optimality Conditions and Exact Penalization for Non-Lipschitz Nonlinear Programs\thanks{The first author's work was supported in part by NSFC Grant (No. 11401379) and the second author's work was supported in part by NSERC.
}
}
\subtitle{Dedicated To R. Terry Rockafellar in honor of his $80$th birthday}

\titlerunning{Non-Lipschitz Nonlinear Programs: Stationarity and Exact Penalization}         

\author{Lei Guo   \and
        Jane J. Ye 
}


\institute{Lei Guo \at
             Sino-US Global Logistics
Institute, Shanghai Jiao Tong University, Shanghai 200030, China\\
             \email{guolayne@sjtu.edu.cn} \and
             Jane Ye\at
         Department of Mathematics and
Statistics, University of Victoria, Victoria, BC, V8W 2Y2, Canada   \\
              \email{janeye@uvic.ca}
}

\date{Received: date / Accepted: date}

\maketitle

\begin{abstract}

When the objective function is not locally Lipschitz, constraint qualifications
are no longer sufficient for Karush-Kuhn-Tucker (KKT)  conditions to hold at a local minimizer, let alone ensuring an exact penalization. In this paper,
we extend  quasi-normality and  relaxed constant positive linear dependence (RCPLD) condition  to allow the non-Lipschitzness of the objective function and show that they are sufficient for KKT conditions to be necessary for optimality.
Moreover, we derive exact penalization results for the following two special cases. When the non-Lipschitz term in the objective function is the sum of a composite function of a separable lower semi-continuous function with a continuous function and an indicator function of a closed subset, we show that a local minimizer of our problem is also a local minimizer of an exact penalization problem under a local error bound condition for a restricted constraint region and a suitable assumption on the outer separable function. When the non-Lipschitz term is the sum of a continuous function and an indicator function  of a closed subset, we also show that our problem admits an exact penalization under an extended quasi-normality involving the coderivative of the continuous function.

\keywords{Non-Lipschitz program \and necessary optimality \and exact penalization \and error bound}

 \subclass{90C26 \and 90C30 \and 90C46}

\end{abstract}
\section{Introduction}
The purpose of this  paper is to study necessary optimality conditions and exact penalization for the following non-Lipschitz nonlinear program:
\begin{eqnarray}\label{eq:P0}
     \min
     && f(x)+\Phi(x)\nonumber\\
      {\rm s.t.} && g(x)\leq0,\\
                 && h(x)=0,\nonumber
\end{eqnarray}
where $f:\Re^d\to \Re,g:\Re^d\to\Re^n,h:\Re^d\to\Re^m$ are  Lipschitz around the point of interest, and $\Phi:\Re^d\to (-\infty, \infty]$ is an extended-valued lower semi-continuous function.

Including a non-Lipschitz term in the objective function has significantly enlarged the applicability of standard nonlinear programs.
For example, it has recently been discovered that when the term $\Phi$ belongs to a certain class of non-Lipschitz functions, local minimizers of problem \eqref{eq:P0} are often sparse. This property makes problem \eqref{eq:P0} useful for seeking a sparse solution in many fields such as image restoration, signal processing, wireless communication, and portfolio selection in financial applications; see, e.g., \cite{int:Donoho,Chartrand07,BianChen15,Chen2013,Liu2015}.

It is well-known that a {\it constraint qualification}  is a condition imposed on  constraint functions so that Karush-Kuhn-Tucker (KKT)   conditions hold at a local minimizer. There exist very weak constraint qualifications such as Guignard's and Abadie's constraint qualifications (\cite{Guignard,Abadie}) but they are not easy to verify since it  involves computing the tangent or normal cone of the constraint region. The challenge is to find verifiable constraint qualifications that are applicable to as many situations as possible. For  nonlinear programs where the objective function is locally Lipschitz, the verifiable classical  constraint  qualifications in the literature include linear independence constraint qualification, Slater's condition, and Mangasarian-Fromovitz constraint qualification (MFCQ). Moreover, it is well-known that when all constraint functions are linear, no constraint qualification is required for KKT conditions to hold at a local minimizer. In recent years, quite a few new and weaker verifiable constraint qualifications have been introduced; see, e.g., \cite{BertsekasJOTA2002,cpld-quasi,{gfrerer},Minchenko-stakhovski,RCPLD,abdeabi--twoCQ,ccp-2016,SCQ}. In particular, quasi-normality is a weak constraint qualification that was first introduced in \cite{BertsekasJOTA2002} and extended to locally Lipschitz programs in \cite{ye-zhang}.
The recently introduced relaxed constant positive linear dependence (RCPLD) condition  in \cite{RCPLD} is also a weak constraint qualification. Both quasi-normality and RCPLD are weaker than MFCQ and hold automatically when all constraint functions are linear (see \cite[Proposition 3.1]{BertsekasJOTA2002}). Moreover, they can admit a local error bound for the constraint region (see \cite{RCPLD,guoyezhang-infinite}) and thus by Clarke's exact penalization principle \cite[Proposition 2.4.3]{c}, they are sufficient to ensure an exact penalization when the objective function is locally Lipschitz.




Little research has been done in  KKT necessary optimality conditions for non-Lipschitz nonlinear programs in the literature, let alone exact penalization. The Fritz John type necessary optimality conditions for non-Lipschitz programs { were first given by Kruger and Mordukhovich in \cite{Kruger-Mord79} and reproved by  Mordukhovich in \cite[Theorem 1(b)]{Mor80} and  Borwein et al. in \cite[Corollary 2.6]{int:Borwein}}. For our problem, since all  functions are locally Lipschitz  except the objective function, (A1) in \cite[Corollary 2.6]{int:Borwein} never holds and consequently (A2) in \cite[Corollary 2.6]{int:Borwein} holds. Hence, the Fritz John condition \cite[Corollary 2.6]{int:Borwein} for our problem states that at a local minimizer $x^*$, there exist $\lambda\in \Re^{|\cI^*|}_+$ and $\mu\in\Re^m$ not all zero such that at least one of the following cases holds:
\begin{itemize}
\item[(i)]
$0\in  \partial^\infty \Phi (x^*) + \sum\limits_{i\in \cI^*} \lambda_i\partial g_i(x^*) + \sum\limits_{j=1}^m \partial (\mu_jh_j)(x^*)$,
\item[(ii)] $0\in   \partial(f+\Phi) (x^*)+ \sum\limits_{i\in \cI^*}  \lambda_i\partial g_i(x^*) + \sum\limits_{j=1}^m \partial (\mu_jh_j)(x^*)$,
\end{itemize}
where $\cI^*:=\{i: g_i(x^*)=0\}$, and $\partial, \partial^\infty$ denote the limiting subdifferential and the horizon subdifferential respectively (see the definitions in Section \ref{sec:pre}). Consequently, we can derive the KKT necessary optimality condition  from the above Fritz John condition immediately as follows. Suppose that there are no nonzero abnormal multipliers, i.e., the following implication holds:
\begin{eqnarray}\label{generazied mf}
\left\{\begin{array}{ll} 0\in  \partial^\infty \Phi (x^*) + \sum\limits_{i=1}^n \lambda_i\partial g_i(x^*) + \sum\limits_{j=1}^m \partial (\mu_jh_j)(x^*)\\[2pt] \lambda_i\geq0,\ \lambda_i g_i(x^*)=0 \ i=1,\ldots,n\end{array}\right. \Longrightarrow (\lambda,\mu)=0,
\end{eqnarray}
then the above condition (ii) holds, which means that $x^*$ is a KKT point. We call the implication \eqref{generazied mf}  $\partial^\infty$-no nonzero abnormal multiplier constraint qualification ($\partial^\infty$-NNAMCQ)  at $x^*$.
Note that when $\Phi$ is Lipschitz around $x^*$, we have $\partial^\infty \Phi (x^*)=\{0\}$ and hence $\partial^\infty$-NNAMCQ reduces to the standard NNAMCQ for nonlinear programs with equality and  inequality constraints. When $\Phi$ is an indicator function of a closed subset,
$\partial^\infty$-NNAMCQ reduces to the standard NNAMCQ for nonlinear programs with equality, inequality, and abstract set constraints. When $\Phi$ is neither Lipschitz around $x^*$ nor equal to an indicator function, the implication \eqref{generazied mf} involves the horizon subdifferential $\partial^\infty \Phi (x^*)$ of the non-Lipschitz term. Thus, $\partial^\infty$-NNAMCQ is no longer a constraint qualification since it is related to the objective function. However, since it is a condition under which a local minimizer is a KKT point, we call such a condition {\it a qualification condition}. Very recently, Chen et al. \cite{chenguoluye} gave some necessary optimality conditions for problem \eqref{eq:P0} where the non-Lipschitz term $\Phi$ is continuous and all the other functions are continuously differentiable  under RCPLD,  and the so-called basic qualification (BQ  for short) (see the definition in \eqref{cq2}), and  proposed an augmented Lagrangian method for solving this kind of problems. It should be noted that BQ is very difficult to verify as discussed in the paragraph after Corollary \ref{cor2}.



In this paper, we extend the standard quasi-normality and the standard RCPLD to problem \eqref{eq:P0}. Similar to  $\partial^\infty$-NNAMCQ, our new qualification conditions also involve $\partial^\infty \Phi (x^*)$ and we call them $\partial^\infty$-quasi-normality and $\partial^\infty$-RCPLD respectively. Moreover, we derive two exact penalization results for two special cases of problem \eqref{eq:P0} under some suitable  conditions.
We summarize our main contributions as follows:
\begin{itemize}
\item We introduce two new verifiable qualification conditions called  $\partial^\infty$-quasi-normality and $\partial^\infty$-RCPLD respectively and show that they are sufficient for  KKT conditions to be necessary for optimality. These two qualification conditions are both weaker than $\partial^\infty$-NNAMCQ and hold automatically when $\Phi$ is Lipschitz around the point of interest and $g,h$ are linear. As a by-product,
    we extend the standard RCPLD on smooth  constraint functions to the case where there is an extra abstract set constraint and show that KKT conditions are necessary for optimality.

\item Exact penalization for two special cases of problem \eqref{eq:P0} are derived. Case i): $\Phi$ is the sum of a composite function of a separable lower semi-continuous function with a continuous function and an indictor function of a closed subset. In this case, we show that a local minimizer of problem \eqref{eq:P0} is also a local minimizer of an exact penalization problem under a local error bound condition for a restricted constraint region and a suitable assumption on the outer separable function. Case ii): $\Phi$ is the sum of a continuous function and an indicator function of a closed subset. In this case, we introduce $D^*$-quasi-normality that is an extended quasi-normality involving  the coderivative  of the continuous function, and show that $D^*$-quasi-normality is sufficient to ensure an exact penalization. Note that $D^*$-quasi-normality reduces to the standard quasi-normality for nonlinear programs with equality, inequality, and abstract set constraints when the continuous function is Lipschitz around the point of interest.
\end{itemize}

The rest of this paper is organized as follows. In Section \ref{sec:pre} we give some background materials. In Section \ref{sec:qc} we propose some qualification conditions for problem \eqref{eq:P0}. In Section \ref{sec:nec} we derive necessary optimality conditions for problem \eqref{eq:P0} under these qualification conditions. We investigate some sufficient conditions ensuring an exact penalization for problem \eqref{eq:P0} in Section \ref{sec:exact}.

\section{Preliminaries}\label{sec:pre}


The notations used in this paper are standard in the literature. The symbol $\mN$ (resp., $\Re, \Re_+,\Re_-$)   denotes the set of nonnegative integers  (resp., real numbers, nonnegative real numbers, nonpositive real numbers).  For a finite set $T$, $|T|$ denotes its cardinality. For any $x\in \Re^d$, we denote by $x_+:=\max\{x,0\}$ the non-negative part of $x$,  $\|x\|_p:={ \large (\sum\limits_{i=1}^d|x_i|^p \large)^{1/p}}$ for any $p>0$, and $\|x\|$ any norm in $\Re^d$. Let $\cB_\delta(x)$ denote a closed ball centered at $x$ with positive radius $\delta$. The indicator function of a subset $\cD\subseteq \Re^d$ is denoted by $\delta_\cD$ and $\dist_\cD(x)$ denotes the Euclidean distance from $x$ to $\cD$. Let $\cF$ denote the constraint region for problem \eqref{eq:P0} and for any $x\in\cF$, denote by
$\cI_g(x):=\{i: g_i(x)=0\}$ the index set of active inequality constraints.

We say that $\cF$ admits a local error bound at ${\bar x}\in \cF$ if there exist $\delta>0$ and $\kappa >0$ such that $\dist_\cF(x) \le \kappa (\|h(x)\| + \|g(x)_+\|)$ for all $x\in \cB_\delta({\bar x})$.

We next give some background materials on variational analysis; see, e.g., \cite{c,clsw,Rock98,mor1} for more details. For a function $\varphi:\Re^d\to[-\infty,\infty]$ and a point $x^*\in \Re^d$ where $\varphi(x^*)$ is finite, the
regular (or Fr\'{e}chet) subdifferential of $\varphi$ at $x^*$ is defined
as
$$\hat{\partial}\varphi(x^*):=\{v: \varphi(x)\geq \varphi(x^*)+v^T(x-x^*)+o(\|x-x^*\|)\ \forall x\},$$
the limiting (or Mordukhovich) subdifferential of $\varphi$ at $x^*$ is
defined as
$$\partial\varphi(x^*):=\{v: \exists x^k\to_\varphi x^*, v^k\in \hat{\partial}\varphi(x^k) \ {\rm s.t.}\ v^k\to v\},$$
and the horizon (or singular Mordukhovich) subdifferential of $\varphi$ at
$x^*$ is defined as
$$\partial^{\infty}\varphi(x^*):=\{v: \exists x^k\to_\varphi x^*, v^k\in \hat{\partial}\varphi(x^k)\ {\rm and}\ t_k\to0\ {\rm with\ } t_k \geq0 \ {\rm s.t.}\ t_kv^k\to v\},$$
where $o(\cdot)$ means $o(\alpha)/\alpha \to 0$ as $\alpha \to 0$, and $x^k \to_\varphi x^*$ means $x^k\to x^*$ and $\varphi(x^k)\to\varphi(x^*)$ as $k\to\infty$. It is well-known that $\varphi$ is Lipschitz around $x^*$ if and only if $\partial^\infty \varphi(x^*)=\{0\}$ by \cite[Theorem 9.13]{Rock98}.

The regular (or Fr\'{e}chet) normal cone of $\cD$ at $x^*\in \cD$ is a closed convex cone defined as $\widehat{\cN}_\cD(x^*):=\hat{\partial} \delta_\cD(x^*)$ and the limiting  (or Mordukhovich) normal cone of $\cD$ at $x^*$ is
a closed cone defined as $\cN_\cD(x^*):=\partial \delta_\cD(x^*)$. We say that $\cD$ is regular at $x^*$ if $\cD$ is locally closed at $x^*$ and $\cN_\cD(x^*)=\widehat{\cN}_\cD(x^*)$.

Given a set-valued mapping $\cS:\Re^d \rightrightarrows\Re^m$ and a point $\bar{x}$ with $\cS(\bar{x})\neq\emptyset$, the coderivative of $\cS$ at $\bar{x}$ for any $\bar{u}\in \cS(\bar{x})$ is the mapping $D^*\cS(\bar{x}|\bar{u}):\Re^m \rightrightarrows \Re^d$ defined by
\[
D^*\cS(\bar{x}|\bar{u})(y):=\{v: (v,-y)\in \cN_{\rm gph \cS}(\bar{x},\bar{u})\},
\]
where ${\rm gph\, \cS}:=\{(x,y): y\in \cS(x)\}$. When $\cS$ is single-valued at $\bar{x}$ with $\cS(\bar x)=\bar u$, the notation $D^*\cS(\bar{x}|\bar{u})$ is simplified to $D^*\cS(\bar{x})$.
In the case where  $\cS$ is not only single-valued but also Lipschitz around $\bar{x}$, the coderivative is related to the limiting subdifferential by the scalarization formula:
$$D^*\cS(\bar x)(y)=\partial \langle y,\cS\rangle (\bar x) \quad \forall y \in \Re^m.$$
We say that $\cS$ is locally bounded at $\bar{x}\in \Re^d$ if there exist $M>0$ and $\delta>0$ such that
\[
\|v\| \leq M \quad \forall v\in \cS(x),\forall x\in \cB_\delta(\bar{x}).
\]
Recall from \cite[Definition 5.4]{Rock98} that $\cS$ is said to be outer semi-continuous at $\bar{x}$ if
\begin{equation*}
\{\bar{v}: \exists x^k\to \bar{x}, v^k\in \cS(x^k)\ {\rm s.t.}\ v^k\to \bar{v}\}\subseteq \cS(\bar{x}).
\end{equation*}
It is well-known that the limiting normal cone mapping, the limiting subdifferential mapping, and the horizon subdifferential mapping are all outer semi-continuous everywhere; see, e.g., \cite[Propositions 6.6 and 8.7]{Rock98}.

By using the outer semi-continuity of the limiting normal cone mapping and the definition of the coderivative, it is easy to give the following proposition that will be useful in deriving exact penalization results in Section \ref{sec:exact}.

\begin{proposition}\label{coderi conti}
The coderivative $D^*S(x|u):\Re^d\rightrightarrows\Re^m$ is outer semi-continuous in the sense that if there exists $v^k\in D^*S(x^k|u^k)(y^k)$ where $x^k\to x^*$, $y^k\to y^*$, and $u^k\to u^*$ with $u^k\in S(x^k)$ such that $v^k\to v^*$, then $v^*\in D^*S(x^*|u^*)(y^*)$.
\end{proposition}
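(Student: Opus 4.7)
The plan is to reduce the claim directly to the definition of the coderivative together with the outer semi-continuity of the limiting normal cone mapping recalled just above the statement (namely \cite[Proposition 6.6]{Rock98}). Since the coderivative is defined precisely by a normal-cone condition on the graph, the whole proposition translates into an assertion about limits of limiting normal vectors over varying base points in ${\rm gph\,}S$.

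First I would unpack the hypothesis: by the definition recalled before the statement, $v^k \in D^*S(x^k|u^k)(y^k)$ is equivalent to $(v^k,-y^k) \in \cN_{{\rm gph\,}S}(x^k,u^k)$. Since $u^k\in S(x^k)$, the base points $(x^k,u^k)$ lie in ${\rm gph\,}S$ and converge to $(x^*,u^*)$, which itself lies in ${\rm gph\,}S$ once one assumes (as is implicit in writing $\cN_{{\rm gph\,}S}(x^*,u^*)$) that ${\rm gph\,}S$ is closed. Combined with $(v^k,-y^k) \to (v^*,-y^*)$, these are exactly the data required to invoke outer semi-continuity of the limiting normal cone mapping at $(x^*,u^*)$, yielding $(v^*,-y^*) \in \cN_{{\rm gph\,}S}(x^*,u^*)$. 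Reading the coderivative definition in the reverse direction then gives $v^* \in D^*S(x^*|u^*)(y^*)$, which is the desired conclusion.

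No serious obstacle is expected: the argument is a direct back-and-forth translation between coderivative-language and normal-cone-language, after which the conclusion is a verbatim application of the outer semi-continuity property already recorded in the paragraph preceding the proposition. The only mild subtlety is verifying that $(x^*,u^*) \in {\rm gph\,}S$ so that its normal cone is even defined, and this follows automatically from the standard closed-graph assumption on $S$ that is needed for the limiting normal cone mapping to be outer semi-continuous in the first place.
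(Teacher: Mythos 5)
Your argument is exactly the one the paper intends: it states immediately before the proposition that the result follows ``by using the outer semi-continuity of the limiting normal cone mapping and the definition of the coderivative,'' which is precisely your translation of $v^k\in D^*S(x^k|u^k)(y^k)$ into $(v^k,-y^k)\in\cN_{\rm gph\,S}(x^k,u^k)$ followed by passing to the limit. Your remark about needing $(x^*,u^*)\in{\rm gph}\,S$ (local closedness of the graph) is a fair point of care, but otherwise the proposal matches the paper's approach.
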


The following proposition collects some useful properties and calculus rules of the limiting subdifferential.

\begin{proposition} \label{calculus}
\begin{itemize}
\item[\rm (i)] {\rm \cite[Exercise 10.10]{Rock98}} Let $f, g:\Re^d \to [-\infty,\infty]$ be proper lower semi-continuous around $x^* \in \Re^d$ and finite at $x^*$, and let $\alpha,\beta$ be nonnegative scalars.  Assume that at least one of them is Lipschitz around $x^*$. Then
$$ \partial (\alpha f+\beta g)(x^*) \subseteq \alpha \partial f(x^*)+\beta \partial  g(x^*).$$
Here we let $0\cdot \emptyset =\{0\}$ by convention.
\item[\rm (ii)]{\rm \cite[Theorem 2.5 and Remark (2)]{jourani1993}}  Let $g: \Re^n\to\Re^m$ be Lipschitz around $x^*$ and $f:\Re^m\to\Re $ be Lipschitz around $g(x^*)$. Then the composite function $f\circ g$ is Lipschitz  around $x^*$ and
$$\partial  (f\circ g)(x^*) \subseteq \bigcup_{\xi \in \partial  f(g(x^*))} \partial  \langle \xi, g\rangle (x^*).$$

\item[\rm (iii)]{\rm \cite[Theorem 3.38]{mor1}} Let $g: \Re^d\to\Re^m$ be continuous at $x^*$ and $f:\Re^m\to\Re $ be Lipschitz around $g(x^*)$. Then
\begin{eqnarray*}
\partial  (f\circ g)(x^*) \subseteq \bigcup_{\xi \in \partial  f(g(x^*))} D^* g(x^*) (\xi),\quad
\partial^\infty  (f\circ g)(x^*) \subseteq D^* g(x^*) (0).
\end{eqnarray*}


\item[\rm (iv)]{\rm \cite[Theorem 7.5]{Mor-nonsmooth}} Let $f({x}) := \max\{f_i({x}): i = 1, \ldots, s\}$ where $f_i : \Re^d\to \Re$ is continuous at $x^*$ for all $i=1,\dots, s$. If all but at most one of the functions $\{f_i: i=1,\ldots,s\}$ are Lipschitz around $x^*$, then
$$\partial  f(x^*) \subseteq \bigcup \left\{\sum_{i\in \cI^*} \lambda_i\diamond \partial f_i(x^*): (\lambda_1,\ldots,\lambda_s)\in\Lambda^*\right\},$$
where $\cI^*:= \{i: f_i(x^*) = f(x^*)\}$ is the index set of active indices and
$$\Lambda^*:=\left\{(\lambda_1,\ldots,\lambda_s): \lambda_i\geq0 \ i\in \cI^*,
\lambda_i=0 \ i\notin \cI^*,\ \sum_{i\in\cI^*} \lambda_i=1\right\}.$$
Here we define $\alpha\diamond \partial g$ by $\alpha\partial g$ if $\alpha>0$ and by $\partial^\infty g$ if $\alpha=0$.
 \end{itemize}
\end{proposition}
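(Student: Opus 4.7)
The statement to prove is a compendium of four standard subdifferential calculus rules, each attributed to a specific reference in the nonsmooth analysis literature. Rather than redevelop the variational-analytic machinery from scratch, the plan is to invoke the cited sources and, for each item, sketch the mechanism that produces the inclusion, so that the reader can see why the claimed Lipschitz/continuity hypotheses are the right ones. Throughout, the workhorse is the fuzzy sum rule for Fréchet subdifferentials together with the passage to limits that defines $\partial\varphi(x^*)$ and $\partial^\infty\varphi(x^*)$.

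For (i), the sum rule, the idea is to apply the fuzzy sum rule to $\alpha f+\beta g$ at nearby points, obtaining approximate subgradients $v^k\in\alpha\hat\partial f(x^k)+\beta\hat\partial g(y^k)$ with $x^k,y^k$ near $x^*$. Taking $v^k\to v\in\partial(\alpha f+\beta g)(x^*)$, the Lipschitz hypothesis on one of the functions ensures the corresponding sequence of Fréchet subgradients stays bounded, so after extracting subsequences both summands converge to elements of $\partial f(x^*)$ and $\partial g(x^*)$ respectively. Without local Lipschitzness, only the horizon subdifferential of the non-Lipschitz summand appears, which is exactly what the convention $0\cdot\emptyset=\{0\}$ rules out; this is the content of \cite[Exercise 10.10]{Rock98}.

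Parts (ii) and (iii) are chain rules and can be handled in one stroke. The general chain rule (iii) is proved in \cite[Theorem 3.38]{mor1} by expressing $f\circ g$ via the epigraphical/graphical construction: one applies limiting-subdifferential calculus to the function $(x,y)\mapsto f(y)+\delta_{\mathrm{gph}\, g}(x,y)$ at $(x^*,g(x^*))$, uses that $f$ is Lipschitz (so its horizon subdifferential is $\{0\}$ and a sum rule applies), and reads off the coderivative from the normal cone to $\mathrm{gph}\, g$; the horizon-subdifferential inclusion $\partial^\infty(f\circ g)(x^*)\subseteq D^*g(x^*)(0)$ comes from the same argument by tracking only the horizon contribution. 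Then (ii) follows from (iii) by the scalarization formula $D^*g(x^*)(\xi)=\partial\langle\xi,g\rangle(x^*)$, which holds because $g$ is Lipschitz at $x^*$; this is the route taken in \cite[Theorem 2.5]{jourani1993}.

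For (iv), the max rule, the plan is to write $f=\psi\circ F$ where $F(x)=(f_1(x),\ldots,f_s(x))$ and $\psi(y)=\max_i y_i$, then combine the well-known convex formula $\partial\psi(y^*)=\{\lambda\in\Re^s_+:\sum_i\lambda_i=1,\ \lambda_i=0\text{ for }i\notin\cI^*\}$ with the chain rule of (iii). Because $\psi$ is Lipschitz on $\Re^s$, (iii) applies and produces $\partial f(x^*)\subseteq\bigcup_{\lambda\in\Lambda^*}D^*F(x^*)(\lambda)$. The subtle point, and the one place where care is needed, is that at most one $f_i$ may fail to be Lipschitz; the coderivative $D^*F$ must then be disassembled componentwise, and for the non-Lipschitz coordinate the factor $\lambda_i=0$ forces the corresponding term to be $\partial^\infty f_i(x^*)$ rather than $\lambda_i\partial f_i(x^*)$, which is exactly the $\diamond$ convention. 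The main obstacle—and the reason the cited theorem \cite[Theorem 7.5]{Mor-nonsmooth} is nontrivial—is handling this single non-Lipschitz coordinate: one has to argue that in the limit defining $\partial f(x^*)$, the possibly unbounded multiplier sequences renormalize in a way that produces a horizon subgradient exactly when the corresponding active multiplier collapses to zero. Once this case analysis is recorded, the four inclusions are complete.
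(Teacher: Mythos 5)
The paper offers no proof of this proposition at all---it is a compendium of known calculus rules justified entirely by the citations to \cite{Rock98}, \cite{jourani1993}, \cite{mor1}, and \cite{Mor-nonsmooth}---and your proposal takes the same route, deferring to those same sources while adding accurate sketches of the underlying mechanisms (fuzzy sum rule plus boundedness from Lipschitzness for (i), the graphical chain rule and scalarization for (ii)--(iii), and the renormalization of multipliers that produces the $\diamond$ convention in (iv)). Your treatment is consistent with, and somewhat more informative than, the paper's own.
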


\section{Qualification conditions}\label{sec:qc}


Since the objective function of problem \eqref{eq:P0} includes a non-Lipschitz term,  KKT conditions are no longer  necessary for optimality only under constraint qualifications such as the standard quasi-normality \cite[Definition 5]{ye-zhang} and the standard RCPLD \cite[Definition 4]{RCPLD}.
In this section we extend the standard quasi-normality and the standard RCPLD to problem \eqref{eq:P0} as follows so that  KKT conditions can be necessary for optimality under the extended quasi-normality and the extended RCPLD respectively.

\begin{definition}\label{defi}
Let $x\in\cF$. (a) We say that $x$ is $\partial^\infty$-quasi-normal if there is no nonzero vector $(\lambda,\mu)\in\Re^n\times \Re^m$ such that there exists a sequence $\{x^k\}$ which converges to $x$ as $k\to\infty$ satisfying
\begin{eqnarray}
&& 0\in  \partial^\infty \Phi(x)+\sum_{i=1}^n \lambda_i\partial g_i(x) + \sum_{j=1}^m \partial (\mu_jh_j)(x),\label{eqn1}\\
&&\lambda_i\geq0,\ \lambda_ig_i(x)=0\ i=1,\ldots,n,\label{eqn2}\\
&& g_i(x^k)>0\ i\in I,\ \mu_j h_j(x^k)>0\ j\in J,\quad \forall k\in \mN,\label{eqn3}
\end{eqnarray}
where $I:=\{i:\lambda_i>0\}$, $J:=\{j:\mu_j\neq0\}$, and $\mN$ is the set of all positive integers.

(b) Assume that $g,h$ are smooth around $x$. Let $\cJ\subseteq\{1,\ldots,m\}$ be such that $\{\nabla
h_j(x): j\in \cJ\}$ is a basis for  {the} ${\rm span}\,\{\nabla h_j(x): j=1,\ldots,m\}$. We say that $\partial^\infty$-RCPLD condition holds at $x$ if there exists $\delta>0$ such that
\begin{itemize}
\item[\rm (i)] $\{\nabla h_j(y): j=1,\ldots,m\}$ has the same rank for each $y\in {\cal
B}_{\delta}(x)$;
\item[\rm (ii)] for each $\cI\subseteq \cI_g(x)$, if there exist $\{\lambda_i\geq0:
i\in\cI\}$ and $\{\mu_j: j\in\cJ\}$ not all zero such that
    \begin{equation}
 0\in  \partial^\infty \Phi(x)+\sum_{i\in\cI} \lambda_i \nabla g_i(x)+ \sum_{j\in\cJ} \mu_j \nabla h_j(x),\label{Defn2.3}
    \end{equation}
    then $\{ \nabla g_i(y), \nabla h_j(y): i\in \cI, j\in \cJ\}$ is linearly dependent for each $y\in{\cal B}_{\delta}(x)$.
\end{itemize}
\end{definition}

It is easy to see that both $\partial^\infty$-quasi-normality and $\partial^\infty$-RCPLD are weaker than
$\partial^\infty$-NNAMCQ (i.e., implication (\ref{generazied mf})) but the reverse is not true; see Examples \ref{example 1}--\ref{example 3}. Note that if $\Phi$ is Lipschitz around $x$, then $\partial^\infty \Phi(x)=\{0\}$, and thus $\partial^\infty$-quasi-normality and $\partial^\infty$-RCPLD reduce to the standard quasi-normality and the standard RCPLD respectively for nonlinear programs with equality and inequality constraints. If $\Phi$ is an indicator function of a closed subset $\Omega$, i.e., $\Phi(x)=\delta_\Omega(x)$, then $\partial^\infty \Phi(x)=\cN_\Omega(x)$ by \cite[Exercise 8.14]{Rock98}. Thus $\partial^\infty$-quasi-normality reduces to the standard quasi-normality for nonlinear programs with equality, inequality, and abstract set constraints, and
$\partial^\infty$-RCPLD allows us to extend the original definition of RCPLD \cite[Definition 4]{RCPLD} to the problem where there is an extra abstract set constraint $x\in \Omega$ since inclusion (\ref{Defn2.3}) becomes
    \[
 0\in   \sum_{i\in\cI} \lambda_i \nabla g_i(x) + \sum_{j\in\cJ} \mu_j \nabla h_j(x) +{\cal N}_\Omega(x).
    \]
In this case we simply say that RCPLD holds.

%

We next extend the standard quasi-normality to problem \eqref{eq:P0} for ensuring an exact penalization.

\begin{definition}\label{D-quasi}
Suppose that $\Phi(x):=\Psi(x)+\delta_\Omega(x)$ where $\Psi$ is a continuous function and $\Omega$ is a closed subset in $\Re^d$. Let $x\in {\cF}$.
We say that $x$ is $D^*$-quasi-normal if there is no nonzero vector $(\lambda,\mu)\in\Re^n\times \Re^m$ such that there exists a sequence $\{x^k\}$ which converges to $x$ as $k\to\infty$ satisfying \eqref{eqn2}--\eqref{eqn3} and
$$ 0\in  D^*\Psi(x)(0)+\sum_{i=1}^n\lambda_i\partial g_i(x) + \sum_{j=1}^m \partial (\mu_jh_j)(x)+\cN_\Omega(x).$$
\end{definition}

If $\Psi$ is Lipschitz  around $x$, then $D^*\Psi(x)(0)=\{0\}$ and hence $D^*$-quasi-normality reduces to the standard quasi-normality for nonlinear programs with equality, inequality, and abstract set constraints. Since $\partial^\infty \Phi(x)\subseteq D^* \Phi(x)(0)$ (see \cite[Theorem 1.80]{mor1}),  $D^*$-quasi-normality is stronger than $\partial^\infty$-quasi-normality when $\Omega=\Re^d$.

We call  problem \eqref{eq:P0} an $\ell_{1/2}$ minimization problem if  the non-Lipschitz term $\Phi(x)$ is equal to {$(\|x\|_{1/2})^{1/2}$}. The problem in the following example is an $\ell_{1/2}$ minimization problem with linear constraints. It gives an example  for which $\partial^\infty$-RCPLD, $\partial^\infty$-quasi-normality, and  $D^*$-quasi-normality are all satisfied
but $\partial^\infty$-NNAMCQ does not hold.

\begin{example}\label{example 1}\rm
Consider the following problem
\begin{eqnarray*}
\begin{array}{rl}
\min \quad & f(x):=\sqrt{|x_1|}+\sqrt{|x_2|}+ \sqrt{|x_3|}+ \sqrt{|x_4|}\\[3pt]
{\rm s.t.\quad } & g_1(x):=x_1+x_2+x_3+x_4-2\leq0,\\[2pt]
           & h_1(x):=x_1+x_2-1=0,\\[2pt]
           & h_2(x):=x_3+x_4-1=0
\end{array}
\end{eqnarray*}
at a minimizer $x^*={(1,0,1,0)}$. By direct calculation, we have $\partial^\infty f(x^*)=\{0\}\times \Re\times \{0\}\times \Re$, $\nabla g_1(x)={(1,1,1,1)}$, $\nabla h_1(x)={(1,1,0,0)}$, and $\nabla h_2(x)={(0,0, 1,1)}$ for any $x$.
Direct verification implies that there exists $(\lambda_1,\mu_1,\mu_2)\neq0$ such that
\[
0\in \partial^\infty f(x^*)+\lambda_1\nabla g_1(x^*)+\mu_1\nabla h_1(x^*)+\mu_2\nabla h_2(x^*),\ \lambda_1\geq0,
\]
which means that $\partial^\infty$-NNAMCQ does not hold at $x^*$. But in this case, the family of gradients $\{\nabla g_1(x),\nabla h_1(x),\nabla h_2(x)\}$ is linearly dependent for any $x$. Thus, $\partial^\infty$-RCPLD holds at $x^*$. To show that  $\partial^\infty$-quasi-normality also holds at $x^*$, we assume that there exist $(\lambda_1,\mu_1,\mu_2)\neq0$ and a sequence $\{x^k\}$ converging to $x^*$ such that
\begin{eqnarray}
&&0\in \partial^\infty f(x^*)+\lambda_1\nabla g_1(x^*) +\mu_1\nabla h_1(x^*)+\mu_2\nabla h_2(x^*),\ \lambda_1\geq0,\label{example1}\\
&& g_1(x^k)>0\ {\rm if}\ \lambda_1>0,\ \mu_1h_1(x^k)>0\ {\rm if}\ \mu_1\neq0,\  \mu_2h_2(x^k)>0\ {\rm if}\ \mu_2\neq0.\label{example2}
\end{eqnarray}
By \eqref{example1}, it follows that $\mu_1=\mu_2=-\lambda_1$. Thus $\lambda_1>0$ and $\mu_1=\mu_2<0$. These together with \eqref{example2} leads to
\[
2<x_1^k+x_2^k+x_3^k+x_4^k<2.
\]
This contradiction shows that there is no nonzero $(\lambda_1,\mu_1,\mu_2)$ satisfying \eqref{example1}--\eqref{example2}. Thus, $\partial^\infty$-quasi-normality holds at $x^*$. Since it is easy to verify that $D^*f(x^*)(0)=\partial^\infty f(x^*)$, $D^*$-quasi-normality holds at $x^*$ as well. \qquad $\square$
\end{example}

The following example of an $\ell_{1/2}$ minimization problem with nonlinear constraints illustrates that it is possible that $\partial^\infty$-quasi-normality holds but $\partial^\infty$-RCPLD does not hold.

\begin{example}\rm\label{example 3}
Consider the following problem
\begin{eqnarray*}
\begin{array}{rl}
\min \quad & f(x):= \sqrt{|x_1|}+\sqrt{|x_2|}+ \sqrt{|x_3|}\\[3pt]
{\rm s.t.}\quad & g(x):=x_1+x_2-x_3^2-1\leq0,\\[2pt]
           & h(x):=x_1+x_2-1=0
\end{array}
\end{eqnarray*}
at a minimizer $x^*={(1,0,0)}$.  $\partial^\infty$-RCPLD does not hold at $x^*$ since there exists $(\lambda,\mu)\neq0$ such that
\begin{eqnarray*}
0\in \partial^\infty f(x^*)+\lambda\nabla g(x^*)+\mu\nabla h(x^*),\ \lambda\geq0
\end{eqnarray*}
but $\{\nabla g(x),\nabla h(x)\}$ is linearly independent for any $x$ with $x_3\not =0$. To show that  $\partial^\infty$-quasi-normality holds at $x^*$, assume that there exist $(\lambda,\mu)\neq0$ and a sequence $\{x^k\}$ converging to $x^*$ such that
\begin{eqnarray}
&& 0\in \partial^\infty f(x^*)+\lambda\nabla g(x^*)+\mu\nabla h(x^*),\ \lambda\geq0,\label{example3}\\
&& g(x^k)>0\ {\rm if}\ \lambda>0,\ \mu h(x^k)>0\ {\rm if}\ \mu\neq0.\label{example4}
\end{eqnarray}
By \eqref{example3}, it follows that $\lambda+\mu=0$. Thus $\lambda>0$ and $\mu<0$. But by \eqref{example4}, we have $1+(x_3^k)^2<1$. The contradiction shows that  $\partial^\infty$-quasi-normality holds at $x^*$. \qquad $\square$
\end{example}

We know that the standard RCPLD does not imply the standard quasi-normality (see \cite[Example 2]{RCPLD}). It is also easy to see that the standard RCPLD and the standard quasi-normality at a local minimizer $x^*$ of the problem
\begin{equation}\label{example}
\min\ f(x) \quad {\rm s.t.}\ x\in {\cal F}
\end{equation}
are equivalent to $\partial^\infty$-RCPLD and $\partial^\infty$-quasi-normality at a local minimizer $(x^*,0)$ of the perturbed problem
\[
\min\ f(x)+\sqrt{|z|} \quad {\rm s.t.}\ x\in {\cal F}, z\in \Re,
\]
respectively. Thus, if problem \eqref{example} is such that the standard RCPLD holds but the standard  quasi-normality does not hold, then   $\partial^\infty$-RCPLD holds but $\partial^\infty$-quasi-normality does not hold for the above perturbed problem.


We next give some characterizations for  $\partial^\infty$-quasi-normality,  $\partial^\infty$-RCPLD, and  $D^*$-quasi-normality in terms of the standard quasi-normality and the standard RCPLD.

\begin{proposition}\label{prop suff} Let $x^*\in\cF$.
{\rm (i)} If $\partial^\infty$-quasi-normality holds at $x^*$, then both the standard quasi-normality and
the following basic qualification (BQ) hold at $x^*$:
\begin{equation}\label{cq2}
-\partial^\infty \Phi(x^*)\cap \cN_\cF(x^*) =\{0\}.
\end{equation}
If $g,h$ are smooth around $x^*$, then  $\partial^\infty$-quasi-normality at $x^*$ is equivalent to the standard quasi-normality plus BQ \eqref{cq2} at $x^*$.

{\rm (ii)} $\partial^\infty$-RCPLD holds at $x^*$ if and only if both the standard RCPLD and BQ \eqref{cq2} hold at $x^*$.

{\rm (iii)} If $D^*$-quasi-normality holds at $x^*$, then the standard quasi-normality holds at $x^*$ and
\begin{equation}\label{cq1}
-D^* \Psi(x^*)(0)\cap \cN_\cF(x^*) =\{0\}.
\end{equation}
If $\Omega$ is regular and $g,h$ are smooth around $x^*$, then $D^*$-quasi-normality holds at $x^*$ if and only if  both the standard quasi-normality  and condition (\ref{cq1}) hold at $x^*$.
\end{proposition}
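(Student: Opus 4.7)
The plan is to treat (i), (ii), and (iii) in parallel since they share a common mechanism: the horizon object $\partial^\infty\Phi(x^*)$ (resp.\ $D^*\Psi(x^*)(0)$) always contains $0$, so the extended qualification reduces to its standard counterpart by choosing the zero element from that horizon object; meanwhile, any nonzero element of the horizon object must be absorbed by a matching element of the normal cone to the constraint region, which is exactly what BQ (\ref{cq2}) (resp.\ (\ref{cq1})) rules out.

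For the forward implication in each part, I would first observe that the standard quasi-normality (resp.\ standard RCPLD) follows at once by specialising (\ref{eqn1}) or (\ref{Defn2.3}) to $0\in\partial^\infty\Phi(x^*)$. To produce BQ, I would argue by contradiction: suppose $0\neq v\in -\partial^\infty\Phi(x^*)\cap \cN_\cF(x^*)$. Since $v\in\cN_\cF(x^*)$, known upper-estimate results for the limiting normal cone under standard quasi-normality (in the spirit of \cite{ye-zhang}) and under standard RCPLD (in the spirit of \cite{RCPLD}) supply a multiplier representation
$$v\in \sum\lambda_i\partial g_i(x^*)+\sum\partial(\mu_jh_j)(x^*)$$
that is additionally accompanied by a sequence $x^k\to x^*$ fulfilling (\ref{eqn3}) in the QN case, or by a Caratheodory-reduced choice of multipliers whose associated active gradients are linearly independent in the RCPLD case. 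Adding $-v\in\partial^\infty\Phi(x^*)$ places $0$ in $\partial^\infty\Phi(x^*)+\sum\lambda_i\partial g_i(x^*)+\sum\partial(\mu_jh_j)(x^*)$ with $(\lambda,\mu)\neq 0$ (as $v\neq 0$), directly violating $\partial^\infty$-quasi-normality in part (i), and colliding with the linear independence of the reduced gradients in part (ii). Part (iii) is identical with $D^*\Psi(x^*)(0)$ replacing $\partial^\infty\Phi(x^*)$.

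For the reverse implications under smoothness of $g,h$ (and regularity of $\Omega$ in (iii)), I would work by contraposition: assuming that the extended qualification fails, pick witness multipliers $(\lambda,\mu)\neq 0$ and, by smoothness, a vector $v$ in the horizon object with $-v=\sum\lambda_i\nabla g_i(x^*)+\sum\mu_j\nabla h_j(x^*)$ (absorbing an element of $\cN_\Omega(x^*)$ in part (iii), where regularity of $\Omega$ gives $\cN_\Omega(x^*)=\widehat{\cN}_\Omega(x^*)$ and keeps the later inclusion step clean). The case $v=0$ collapses (\ref{eqn1}) or (\ref{Defn2.3}) to the defining inclusion of the standard qualification and thus yields the required violation. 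The case $v\neq 0$ uses the always-valid inclusion
$$\Bigl\{\sum\lambda_i\nabla g_i(x^*)+\sum\mu_j\nabla h_j(x^*):\lambda_i\ge 0,\ \lambda_ig_i(x^*)=0\Bigr\}\subseteq \widehat{\cN}_\cF(x^*)\subseteq \cN_\cF(x^*),$$
which follows in the smooth setting from a first-order Taylor estimate at $x^*$, to put $-v$ into $-\partial^\infty\Phi(x^*)\cap\cN_\cF(x^*)$ and contradict BQ, with the analogous statement for (\ref{cq1}) in part (iii).

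The main technical obstacle is the upper-estimate step in the forward direction: invoking a usable multiplier representation of $\cN_\cF(x^*)$ under standard quasi-normality or standard RCPLD is not elementary, and in the RCPLD case it additionally requires a Caratheodory reduction to isolate a linearly independent subset of active gradients so that the conclusion of $\partial^\infty$-RCPLD (linear dependence on a neighbourhood) triggers a contradiction with the independence of the reduced representation. By contrast, the reverse direction is essentially an algebraic check once the classical inclusion of the KKT cone into the Fr\'echet normal cone is invoked.
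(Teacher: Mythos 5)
Your proposal is correct and follows essentially the same route as the paper: the standard condition is obtained by specialising to $0\in\partial^\infty\Phi(x^*)$ (resp.\ $0\in D^*\Psi(x^*)(0)$), BQ is obtained by contradiction from the enhanced normal-cone upper estimates under standard quasi-normality \cite{ye-zhang} and standard RCPLD \cite{Guo14}, and the converses use the inclusion of the multiplier cone into $\cN_\cF(x^*)$ available in the smooth (and, for (iii), regular) setting. The only notable variation is in the forward direction of (ii), where you perform a one-shot Carath\'eodory-type reduction to a linearly independent active family and then invoke Definition \ref{defi}(b)(ii) once, whereas the paper iteratively eliminates inequality gradients by repeatedly applying Definition \ref{defi}(b)(ii) until only $\{\nabla h_j(x^*): j\in\cJ\}$ remains and then contradicts the basis property of $\cJ$; both reductions are valid and reach the same contradiction.
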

\begin{proof}
(i) Suppose that $\partial^\infty$-quasi-normality holds at $x^*$. Then it is easy to see that the standard quasi-normality also holds at $x^*$ since $0\in \partial^\infty \Phi(x^*)$. Thus by \cite[Proposition 4]{ye-zhang}, it follows that
\begin{equation}\label{enhm}
\cN_\cF(x^*)\subseteq \left\{\begin{array}{l}\sum\limits_{i\in\cI^*} \lambda_i\partial g_i(x^*)+ \sum\limits_{j=1}^m \partial (\mu_jh_j)(x^*):  \lambda_i\geq0\ i\in\cI^*,\ \exists \{x^k\}\to x^*\\ {\rm s.t.}\ g_i(x^k)>0\ i\in I, \mu_j h_j(x^k)>0\ j\in J,\quad \forall k\in \mN\end{array}\right\},
\end{equation}
where
\begin{equation}\label{index}
\cI^*:=\cI_g(x^*),\ I:=\{i:\lambda_i>0\},\ J:=\{j:\mu_j\neq0\}.
\end{equation}
We now show that BQ (\ref{cq2}) holds. By contradiction, suppose that $0\neq\zeta\in -\partial^\infty \Phi(x^*)\cap \cN_\cF(x^*)$. Then by \eqref{enhm}, there exist $(\lambda,\mu)\in \Re^{|\cI^*|}\times\Re^m$ and a sequence $\{x^k\}$ converging to $x^*$ such that
\begin{eqnarray*}
&&\zeta\in\sum_{i\in\cI^*} \lambda_i\partial g_i(x^*) + \sum_{j=1}^m \partial (\mu_jh_j)(x^*),\\
&& \lambda_i\ge0 \ i\in\cI^*,\ g_i(x^k)>0\ i\in I,\ \mu_j h_j(x^k)>0\ j\in J,\quad \forall k\in\mN.
\end{eqnarray*}
Since $0 \neq\zeta\in-\partial^\infty \Phi(x^*)$, it then follows that $(\lambda,\mu)\neq0$ and
\begin{eqnarray}
&& 0\in  \partial^\infty \Phi(x^*)+\sum_{i\in\cI^*} \lambda_i\partial g_i(x^*) + \sum_{j=1}^m \partial (\mu_jh_j)(x^*),\label{nec1}\\
&&\lambda_i\geq0\ i\in\cI^*,\ g_i(x^k)>0\ i\in I,\ \mu_j h_j(x^k)>0\ j\in J,\quad \forall k\in\mN, \label{nec2}
\end{eqnarray}
which contradicts $\partial^\infty$-quasi-normality. Thus, BQ \eqref{cq2} holds.

We next show the converse part. Assume that $g,h$ are smooth around $x^*$, and both the standard quasi-normality and BQ \eqref{cq2} hold at $x^*$.  By contradiction, assume that $\partial^\infty$-quasi-normality does not hold at $x^*$. That is, there exist $0\neq(\lambda,\mu)\in\Re^{|\cI^*|}\times \Re^m$ and a sequence $\{x^k\}$ converging to $x^*$ such that
\begin{eqnarray}
&& 0\in  \partial^\infty \Phi(x^*)+\sum_{i\in\cI^*} \lambda_i\nabla g_i(x^*) + \sum_{j=1}^m \mu_j\nabla h_j(x^*),\label{nec1new}\\
&&\lambda_i\geq0\ i\in\cI^*,\ g_i(x^k)>0\ i\in I,\ \mu_j h_j(x^k)>0\ j\in J, \quad \forall k\in\mN, \label{nec2new}
\end{eqnarray}
where $\cI^*,I,J$ are defined as in \eqref{index}.
Moreover, since $g,h$ are smooth around $x^*$, it follows from \cite[Theorem 6.14]{Rock98} that
\begin{equation}
\left\{\sum_{i\in \cI^*} \lambda_i \nabla g_i(x^*)+\sum_{j=1}^m \mu_j\nabla h_j(x^*):  \lambda_i\geq 0\ i\in \cI^*, \mu\in \Re^m\right\}\subseteq \cN_\cF(x^*).\label{smoothN}
\end{equation}
This and \eqref{nec1new} imply that
\[
\sum_{i\in\cI^*} \lambda_i\nabla g_i(x^*) + \sum_{j=1}^m \mu_j\nabla h_j(x^*)\in -\partial^\infty \Phi(x^*)\cap \cN_\cF(x^*),
\]
which together with BQ \eqref{cq2} means that
\[
\sum_{i\in\cI^*} \lambda_i\nabla g_i(x^*) + \sum_{j=1}^m \mu_j\nabla h_j(x^*)=0.
\]
This together with \eqref{nec2new} and the relation $(\lambda,\mu)\neq0$ contradicts the standard quasi-normality. Thus, $\partial^\infty$-quasi-normality holds at $x^*$.

(ii) Let the standard RCPLD and BQ \eqref{cq2} hold at $x^*$. Let $\cJ$ be the index set given in the definition of the standard RCPLD such that $\{\nabla h_j(x^*): j\in \cJ\}$ is a basis for the  ${\rm span}\,\{\nabla h_j(x^*): j=1,\ldots,m\}$, and let $\cI^*$ be defined as in \eqref{index}. To show $\partial^\infty$-RCPLD, it suffices to show that Definition \ref{defi}(b)(ii) holds.
Assume that there exist nonzero vectors $\{\alpha_i\geq0: i\in \cI\}$ with $\cI\subseteq \cI^*$ and $\{\beta_j: j\in \cJ\}$
such that
\[
0\in \partial^\infty\Phi(x^*)  + \sum_{i\in\cI} \alpha_i\nabla g_i(x^*) + \sum_{j\in \cJ} \beta_j\nabla h_j(x^*),
\]
which together with \eqref{cq2} and \eqref{smoothN} implies that
\[
\sum_{i\in \cI} \alpha_i \nabla g_i(x^*) + \sum_{j\in\cJ} \beta_j\nabla h_j(x^*)=0.
\]
This and the standard RCPLD imply the existence of $\delta>0$ such that
$$\{\nabla g_i(x),\nabla h_j(x): i\in \cI, j\in \cJ\}\ {\rm is\ linearly\ dependent\ for\ each}\ x\in{\cal B}_{\delta}(x^*).$$
Thus, Definition \ref{defi}(b)(ii) holds and then $\partial^\infty$-RCPLD holds at $x^*$.

To show the converse part, suppose that $\partial^\infty$-RCPLD holds at $x^*$.  It then follows immediately that RCPLD holds at $x^*$ since $0\in \partial^\infty \Phi(x^*)$. Then by \cite[Theorem 3.2]{Guo14}, it follows that
 \begin{equation}\label{respre}
\cN_\cF(x^*)\subseteq \left\{ \sum_{i\in \cI^*} \lambda_i \nabla g_i(x^*) + \sum_{j=1}^m\mu_j\nabla h_j(x^*): \lambda_i\geq0\ i\in \cI^*, \mu\in \Re^m\right\}.
\end{equation}
We now show that BQ \eqref{cq2} holds. To the contrary, assume  that $0\neq\zeta\in -\partial^\infty \Phi(x^*)\cap \cN_\cF(x^*)$. Let $\cJ$ be such that $\{\nabla h_j(x^*): j\in \cJ\}$ is a basis for the ${\rm span}\,\{\nabla h_j(x^*): j=1,\ldots,m\}$. Then by \eqref{respre}, there exist $\{\lambda_i>0: i\in \cI\}$ with $\cI\subseteq\cI^*$  and $\{\mu_j:j\in\cJ\}$ not all zero such that
\begin{equation}\label{contra1}
\zeta=\sum_{i\in \cI} \lambda_i \nabla g_i(x^*) + \sum_{j\in \cJ}\mu_j\nabla h_j(x^*),
\end{equation}
which together with the relation $\zeta\in -\partial^\infty \Phi(x^*)$ implies that
\begin{equation*}
0\in \partial^\infty \Phi(x^*) +\sum_{i\in \cI} \lambda_i \nabla g_i(x^*) + \sum_{j\in \cJ}\mu_j\nabla h_j(x^*).
\end{equation*}
Then by Definition \ref{defi}(b)(ii), there exist $\{\alpha_i:i\in\cI\}$ and $\{\beta_j: j\in \cJ\}$ not all zero such that
\begin{equation*}
\sum_{i\in \cI} \alpha_i \nabla g_i(x^*)+\sum_{i\in \cJ}\beta_j\nabla h_j(x^*)=0,
\end{equation*}
which together with \eqref{contra1} implies that for any $\gamma \in \Re$,
\begin{equation*}
\zeta=\sum_{i\in \cI} (\lambda_i-\gamma\alpha_i) \nabla g_i(x^*) + \sum_{j\in \cJ}(\mu_j-\gamma\beta_j)\nabla h_j(x^*).
\end{equation*}
Choosing $\gamma\neq0$ as the smallest number such that $\lambda_i-\gamma\alpha_i=0$ for at least one $i\in \cI$, we are able to represent $\zeta$ with at least one fewer vectors $\nabla g_i(x^*)$. We may repeat this procedure until $\zeta=\sum_{j\in \cJ}\theta_j\nabla h_j(x^*)$ for some $\{\theta_j: j\in \cJ\}$ not all zero. Then by the relation $\zeta\in -\partial^\infty \Phi(x^*)$, it follows that
\begin{equation*}
0\in \partial^\infty \Phi(x^*) +\sum_{j\in \cJ}\theta_j\nabla h_j(x^*).
\end{equation*}
Thus by Definition \ref{defi}(b)(ii), $\{\nabla h_j(x^*): j\in \cJ\}$  must be linearly dependent. This contradicts the fact that $\{\nabla h_j(x^*): j\in \cJ\}$ is a basis. Hence BQ \eqref{cq2} holds.

(iii) When $g,h$ are smooth around $x^*$ and $\Omega$ is regular, it follows from \cite[Theorem 6.14]{Rock98} that
\begin{equation}
\left\{
\begin{array}{c}\sum\limits_{i\in \cI^*} \lambda_i \nabla g_i(x^*)+\sum\limits_{j=1}^m \mu_j\nabla h_j(x^*)+{\cal N}_\Omega(x^*): \\[2pt] \lambda_i\geq 0\ i\in \cI^*, \mu\in \Re^m
\end{array}
\right\}
\subseteq \cN_\cF(x^*). \label{smoothNnew}
\end{equation}
The proof for (iii) is exactly the same as that for (i) except that $\partial^\infty \Phi(x^*)$ and (\ref{smoothN}) are replaced by $D^*\Psi(x^*)(0)$ and (\ref{smoothNnew}), respectively. \qquad $\square$
\end{proof}

When the constraint region is so simple that its limiting normal cone is easy to calculate directly, we can use Proposition \ref{prop suff} to verify our proposed qualification conditions. The following simple minimax problem illustrates that conditions \eqref{cq2}--\eqref{cq1} hold and then $\partial^\infty$-quasi-normality, $\partial^\infty$-RCPLD, and $D^*$-quasi-normality are all satisfied since the constraint function is linear.

\begin{example} \rm
Consider the following minimax problem
\begin{eqnarray*}
\min\limits_{x\leq0}\max\limits_{y\geq0}\ -x^3+xy.
\end{eqnarray*}
Let $V(x):=\max\{-x^3+xy: y\geq0\}$. Then it is easy to verify that the above minimax problem can be equivalently rewritten as
\begin{equation}\label{valueP}
\min¡¡\ V(x)\quad {\rm s.t.} \ x\leq0,
\end{equation}
where $V(x)= -x^3$ if $ x\leq0$ and $\infty$ otherwise.
Clearly, $x^*=0$ is  a  minimizer of problem \eqref{valueP}. We observe that $V$ is not continuous at $x^*$ but is lower semi-continuous at $x^*$.  Moreover,
\[
\partial^\infty V(x^*)=D^* V(x^*)(0)=\cN_{\Re_-}(x^*)=\Re_+,
\]
which indicates that $-\partial^\infty V(x^*)\cap \cN_{\Re_-}(x^*)=-D^* V(x^*)(0)\cap \cN_{\Re_-}(x^*)=\{0\}$.  Since the constraint function of problem \eqref{valueP} is linear, the standard RCPLD obviously holds and by \cite[Proposition 3.1]{BertsekasJOTA2002}, the standard quasi-normality is also satisfied. It then follows from Proposition \ref{prop suff} that  $\partial^\infty$-quasi-normality, $\partial^\infty$-RCPLD, and $D^*$-quasi-normality  are all satisfied at $x^*$ for problem \eqref{valueP}. \qquad $\square$ 
\end{example}

By using the outer semi-continuity of the horizon subdifferential mapping and Proposition \ref{coderi conti}, it is not difficult to show that both $\partial^\infty$-quasi-normality and  $D^*$-quasi-normality are locally persistent as follows.

\begin{proposition}
If $\partial^\infty$-quasi-normality ($D^*$-quasi-normality) holds at $x^*\in \cF$, then there exists $\delta_0>0$ such that $\partial^\infty$-quasi-normality ($D^*$-quasi-normality) holds at every point in $\cB_{\delta_0}(x^*)\cap \cF$.
\end{proposition}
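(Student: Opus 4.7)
The plan is a proof by contradiction. Suppose $\partial^\infty$-quasi-normality holds at $x^*$ but no such $\delta_0$ exists; then for each $k\in\mN$ there is $x^k\in \cB_{1/k}(x^*)\cap\cF$ at which it fails. At each $x^k$ I extract nonzero multipliers $(\lambda^k,\mu^k)\in\Re^n\times\Re^m$ and a witness sequence $\{z^{k,l}\}_{l\in\mN}\to x^k$ satisfying (\ref{eqn1})--(\ref{eqn3}) at $x^k$. After normalizing $\|(\lambda^k,\mu^k)\|=1$ and passing to a convergent subsequence, $(\lambda^k,\mu^k)\to(\lambda^*,\mu^*)\neq 0$. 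The goal is to show that $(\lambda^*,\mu^*)$, together with a suitable sequence $y^k\to x^*$, violates $\partial^\infty$-quasi-normality at $x^*$, a contradiction.

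The complementarity (\ref{eqn2}) at $x^*$ is immediate on taking limits. For (\ref{eqn1}) at $x^*$, I record the inclusion at $x^k$ as the existence of witnesses $\zeta^k\in\partial^\infty\Phi(x^k)$, $\xi^k_i\in\partial g_i(x^k)$, and $\eta^k_j\in\partial(\mu^k_j h_j)(x^k)$ with $\zeta^k+\sum_i\lambda^k_i\xi^k_i+\sum_j\eta^k_j=0$. Because $g_i,h_j$ are Lipschitz near $x^*$ and the multipliers are bounded by $1$, the sequences $\{\xi^k_i\}$ and $\{\eta^k_j\}$ are uniformly bounded, so they admit subsequential limits $\xi^*_i,\eta^*_j$; the summation identity then forces $\zeta^k\to\zeta^*$. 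Outer semi-continuity of the limiting and horizon subdifferential mappings (Propositions 6.6 and 8.7 of \cite{Rock98}, cited in the excerpt) gives $\xi^*_i\in\partial g_i(x^*)$ and $\zeta^*\in\partial^\infty\Phi(x^*)$; the analogous $\eta^*_j\in\partial(\mu^*_j h_j)(x^*)$ follows from outer semi-continuity of the limiting subdifferential applied to the parameter-dependent Lipschitz function $(\mu,x)\mapsto\mu h_j(x)$. To obtain (\ref{eqn3}) at $x^*$, set $I^*:=\{i:\lambda^*_i>0\}$ and $J^*:=\{j:\mu^*_j\neq 0\}$; for $k$ large enough $I^*\subseteq\{i:\lambda^k_i>0\}$ and $J^*\subseteq\{j:\mu^k_j\neq 0\}$ with $\mu^k_j$ matching the sign of $\mu^*_j$, so choosing $l_k$ with $\|z^{k,l_k}-x^k\|<1/k$ produces $y^k:=z^{k,l_k}\to x^*$ with $g_i(y^k)>0$ for $i\in I^*$ and $\mu^*_j h_j(y^k)>0$ for $j\in J^*$, contradicting $\partial^\infty$-quasi-normality at $x^*$.

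The $D^*$-quasi-normality case follows the same template: replace $\partial^\infty\Phi(x^k)$ by $D^*\Psi(x^k)(0)+\cN_\Omega(x^k)$, and use continuity of $\Psi$ to ensure $\Psi(x^k)\to\Psi(x^*)$. The passage to the limit now invokes Proposition \ref{coderi conti} for $D^*\Psi$ and the general outer semi-continuity of the limiting normal cone mapping. The main technical obstacle is the parameter-continuity fact used for $\partial(\mu^k_j h_j)(x^k)$ above (which I would justify by viewing $\mu h_j$ as a Lipschitz function of $(\mu,x)$) and, in the $D^*$ case, that the extracted pieces $\zeta^k\in D^*\Psi(x^k)(0)$ and $\nu^k\in\cN_\Omega(x^k)$, whose sum is bounded, need not be individually bounded a priori; this is resolved by selecting minimal-norm representations (or, in the event of blow-up, by rescaling to obtain a nontrivial limit in $-D^*\Psi(x^*)(0)\cap\cN_\Omega(x^*)$ and then arguing separately), after which outer semi-continuity delivers $\zeta^*\in D^*\Psi(x^*)(0)$ and $\nu^*\in\cN_\Omega(x^*)$ and the same contradiction closes the proof.
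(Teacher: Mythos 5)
The paper gives no written proof of this proposition (it only points to the outer semi-continuity of $\partial^\infty$ and Proposition \ref{coderi conti}), so the relevant benchmark is the paper's written proof of the analogous Proposition \ref{prop per}. Your argument for the $\partial^\infty$-quasi-normality half follows that template and is essentially sound: normalization of the multipliers is legitimate because $\partial^\infty\Phi(x^k)$ is a cone and the sign conditions \eqref{eqn3} are scale-invariant; the terms $\lambda_i^k\xi_i^k$ and $\eta_j^k$ are bounded by Lipschitzness, which forces $\zeta^k$ to be bounded; and the diagonal extraction of $y^k$ from the witness sequences is correct. Two points deserve tightening. First, your justification of $\lim \eta_j^k\in\partial(\mu_j^*h_j)(x^*)$ via the joint function $(\mu,x)\mapsto\mu h_j(x)$ conflates the partial limiting subdifferential in $x$ with the full one and does not close cleanly; the intended route in this paper is the scalarization formula $\partial(\mu_j h_j)(x)=D^*h_j(x)(\mu_j)$ combined with Proposition \ref{coderi conti}, which gives exactly the joint outer semi-continuity in $(x,\mu_j)$ that you need. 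Second, outer semi-continuity of $\partial^\infty\Phi$ strictly requires $\Phi$-attentive convergence $\Phi(x^k)\to\Phi(x^*)$, which is not automatic for $x^k\in\cF$; the paper elides this as well, so I note it only as a shared caveat.

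The $D^*$-quasi-normality half has a genuine gap, and you have correctly located it but not closed it. The inclusion at $x^k$ decomposes as $\zeta^k+\nu^k+(\text{bounded terms})=0$ with $\zeta^k\in D^*\Psi(x^k)(0)$ and $\nu^k\in\cN_\Omega(x^k)$, and neither piece is bounded a priori. Minimal-norm selection does not help, and the rescaling you propose in the blow-up regime fails for a structural reason: dividing by $t_k:=\|\zeta^k\|\to\infty$ sends the multipliers $(\lambda^k,\mu^k)/t_k$ to zero, so the limit only yields $0\neq\bar\zeta\in D^*\Psi(x^*)(0)$ with $-\bar\zeta\in\cN_\Omega(x^*)$ and the trivial pair $(\lambda,\mu)=(0,0)$. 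Since Definition \ref{D-quasi} only prohibits such inclusions for \emph{nonzero} $(\lambda,\mu)$, this is not a contradiction with $D^*$-quasi-normality at $x^*$, and "arguing separately" from this point is precisely the missing step. (Note that in the paper's Lemma \ref{lema nerror} the normal-cone term enters as $\partial\,{\rm dist}_\Omega(x^k)$, which is bounded by $1$, so that proof never meets this obstacle; the persistence statement with the full cone $\cN_\Omega$ does.) To complete your proof one must either assume $\Omega=\Re^d$, or impose/derive the condition $D^*\Psi(x^*)(0)\cap(-\cN_\Omega(x^*))=\{0\}$ to rule out the blow-up, or find a different argument that avoids separating the two unbounded cone terms.
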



We now show that $\partial^\infty$-RCPLD is also locally persistent.

\begin{proposition}\label{prop per}
If $\partial^\infty$-RCPLD
holds at $x^*\in\cF$, then there exists $\delta_0>0$ such that $\partial^\infty$-RCPLD
holds at every point in $ \cB_{\delta_0}(x^*)\cap\cF$.
\end{proposition}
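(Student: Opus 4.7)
The plan is to exploit the equivalence established in Proposition \ref{prop suff}(ii): at $x^*$, $\partial^\infty$-RCPLD holds if and only if both the standard RCPLD and the basic qualification BQ \eqref{cq2} hold. It then suffices to verify that each of these two conditions is locally persistent near $x^*$, and to take $\delta_0$ as the smaller of the two resulting radii.

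For the standard RCPLD, local persistence is already known: see, e.g., \cite{RCPLD} and its extension in \cite{Guo14}. The argument there relies on the lower semi-continuity of the rank of $\{\nabla h_j(\cdot)\}$, the upper semi-continuity of the active-index set mapping $y\mapsto \cI_g(y)$, and a finite case analysis over the possible subsets $\cI\subseteq \cI_g(x^*)$ and basis selections $\cJ$. This yields $\delta_1>0$ such that the standard RCPLD holds at every point of $\cB_{\delta_1}(x^*)\cap\cF$.

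For BQ \eqref{cq2}, I argue by contradiction. Suppose no neighborhood works; then there exist $y^k\in \cF$ with $y^k\to x^*$ and nonzero $\zeta^k\in -\partial^\infty \Phi(y^k)\cap \cN_\cF(y^k)$ for each $k$. Since $\partial^\infty\Phi(y^k)$ and $\cN_\cF(y^k)$ are both cones, I normalize so that $\|\zeta^k\|=1$ and pass to a convergent subsequence $\zeta^k\to \zeta^*$ with $\|\zeta^*\|=1$. Applying the outer semi-continuity of $\cN_\cF$ (which is closed since $g,h$ are continuous) and of $\partial^\infty\Phi$ (from \cite[Propositions 6.6 and 8.7]{Rock98}, as recorded in Section \ref{sec:pre}), I obtain $\zeta^*\in -\partial^\infty\Phi(x^*)\cap \cN_\cF(x^*)\setminus\{0\}$, contradicting BQ \eqref{cq2} at $x^*$. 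Hence there exists $\delta_2>0$ such that BQ \eqref{cq2} holds throughout $\cB_{\delta_2}(x^*)\cap \cF$. Taking $\delta_0:=\min\{\delta_1,\delta_2\}$ and reapplying Proposition \ref{prop suff}(ii) at every point of $\cB_{\delta_0}(x^*)\cap\cF$ completes the argument.

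The main obstacle will be the outer semi-continuity step for the horizon subdifferential, since $\Phi$ is merely lower semi-continuous. Indices $k$ with $\Phi(y^k)=+\infty$ can be dropped because then $\partial^\infty\Phi(y^k)=\emptyset$ and no such $\zeta^k$ exists; for the remaining $k$ one needs $\Phi$-attentive convergence $\Phi(y^k)\to \Phi(x^*)$ in order to legitimately invoke \cite[Proposition 8.7]{Rock98}, which should be made explicit and handled by combining the lower semi-continuity of $\Phi$ with the standing osc framework used in the paper's preliminaries.
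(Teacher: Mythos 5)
Your argument is correct in outline but takes a genuinely different route from the paper's. The paper proves persistence directly from Definition \ref{defi}(b): it fixes a basis $\cJ$, propagates the basis property via the rank condition, and then runs a contradiction argument on a sequence of feasible points where condition (b)(ii) fails, normalizing the multipliers, reducing to finitely many possible index sets $\cI^k$, using a diagonalization to produce points of linear independence, and invoking outer semi-continuity of $\partial^\infty\Phi$ to contradict $\partial^\infty$-RCPLD at $x^*$. You instead route everything through Proposition \ref{prop suff}(ii), reducing the claim to (a) local persistence of the standard RCPLD, which you outsource to \cite{RCPLD,Guo14}, and (b) local persistence of BQ \eqref{cq2}, which you obtain by normalizing $\zeta^k$ (legitimate, since $\partial^\infty\Phi(y^k)$ and $\cN_\cF(y^k)$ are cones) and passing to the limit by outer semi-continuity of both mappings. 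Your decomposition is more modular and isolates BQ persistence as a clean fact of independent interest; the cost is reliance on the external RCPLD persistence result, which the paper's self-contained argument effectively reproves inline together with the $\partial^\infty\Phi$ term.

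The one substantive caveat is the point you flag yourself: outer semi-continuity of $\partial^\infty\Phi$ in the sense of \cite[Proposition 8.7]{Rock98} holds only along $\Phi$-attentive sequences $y^k\to_\Phi x^*$, and for a merely lower semi-continuous $\Phi$ nothing forces $\Phi(y^k)\to\Phi(x^*)$ along your sequence of feasible points; lower semi-continuity gives only $\liminf_k\Phi(y^k)\ge\Phi(x^*)$, so the fix you sketch does not actually close this. Be aware, though, that the paper's own proof invokes the same unqualified outer semi-continuity of the horizon subdifferential mapping when passing to the limit in \eqref{prop per1}, so this is a shared gap rather than a defect specific to your approach; a fully rigorous treatment would either assume continuity of $\Phi$ relative to its domain or build $\Phi$-attentiveness into the argument. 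With that caveat acknowledged on both sides, your proof stands.
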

\begin{proof}
Assume that $\partial^\infty$-RCPLD holds at $x^*$. Let $\cJ\subseteq\{1,\ldots,m\}$
be such that $\{\nabla h_j(x^*): j\in \cJ\}$ is a basis for {the} ${\rm
span}\,\{\nabla h_j(x^*): j=1,\ldots,m\}$. Then it is easy to see that there exists $\delta_1\in(0,\delta)$ such that $\{\nabla h_i(x): i\in \cI\}$ is linearly independent for all $x\in\cB_\delta(x^*)$, where $\delta$ is given in Definition \ref{defi}(b). Then it follows from Definition \ref{defi}(b)(i) that $\{\nabla h_j(x): j\in \cJ\}$ is a basis for {the} ${\rm
span}\,\{\nabla h_j(x): j=1,\ldots,m\}$ for any $x\in \cB_{\delta_1}(x^*)$. Let $\delta_2:=\delta_1/2$. Then by Definition \ref{defi}(b)(i) again, $\{\nabla h_j(y): j=1,\ldots,m\}$ has the same rank for all $y\in\cB_{\delta_2}(x)$ and  $x\in\cB_{\delta_2}(x^*)$. Thus it suffices to show that there exists $\delta_0\in(0,\delta_2)$ such that for any $x\in\cB_{\delta_0}(x^*)$, Definition \ref{defi}(b)(ii) holds at $x$. Assume to the contrary that this is not true. That is, there exist a sequence $\{x^k\}$ converging to $x^*$, and $\{\lambda^k_i\geq 0: i\in\cI^k\}$ with $\cI^k\subseteq\cI_g(x^k)$ and $\{\mu^k_j:j\in\cJ\}$ not all zero  such that
\begin{eqnarray} \label{prop per1}
0\in \partial^\infty\Phi(x^k) + \sum_{i\in\cI^k} \lambda^k_i\nabla g_i(x^k) + \sum_{j\in \cJ} \mu^k_j\nabla h_j(x^k),
\end{eqnarray}
and there exists a sequence $\{y^{k,l}\}_l$ converging to $x^k$ such that
\begin{eqnarray*}
\{\nabla g_i(y^{k,l}),\nabla h_j(y^{k,l}): i\in\cI^k, j\in\cJ\}{\rm \ is\ linearly\ independent\ for\ all}\ l.
\end{eqnarray*}
By the diagonalization law, there exists a sequence $\{z^k\}$ converging to $x^*$ such that
\begin{eqnarray}\label{gra li}
\{\nabla g_i(z^k), \nabla h_j(z^k): i\in\cI^k, j\in\cJ\}\ {\rm is\ linearly\ independent\ for\ all}\ k.
\end{eqnarray}
Since $g$ is continuous, it is easy to verify that $\cI_g(x^k)\subseteq \cI_g(x^*)$ for any $k$ sufficiently large and hence $\cI^k\subseteq\cI_g(x^*)$. Since the number of the possible sets $\cI^k$ is finite, without loss of generality we may assume that $\cI^k\equiv \cI$ for any $k$ sufficiently large. Let
\[
t_k:=\max\{\lambda^k_i, |\mu^k_j|: i\in \cI,j\in\cJ\}.
\]
Clearly, $t_k>0$ for any $k$. Without loss of generality, we may assume that
\begin{eqnarray*}
\frac{\lambda^k_i}{t_k} \to \lambda^*_i\geq0\quad i\in \cI,\quad \frac{\mu^k_j}{t_k} \to \mu^*_j\quad j\in\cJ\quad {\rm as}\ k\to\infty.
\end{eqnarray*}
It is easy to see that
\[
\max\{\lambda^*_i,|\mu^*_j|: i\in \cI,j\in\cJ\}=1.
\]
By the outer semi-continuity of the horizon subdifferential mapping, dividing \eqref{prop per1} by $t_k$ and taking limits on both sides as $k\to\infty$ imply that
\[
0\in \partial^\infty\Phi(x^*)  + \sum_{i\in\cI} \lambda^*_i\nabla g_i(x^*) + \sum_{j\in \cJ} \mu^*_j\nabla h_j(x^*).
\]
The last two relations and $\partial^\infty$-RCPLD imply that for any $x\in\cB_\delta(x^*)$,
\begin{eqnarray*}
\{\nabla g_i(x),\nabla h_j(x): i\in\cI, j\in\cJ\} \ {\rm is\ linearly\ dependent,}
\end{eqnarray*}
which contradicts \eqref{gra li}. The desired result follows immediately. \qquad  $\square$
\end{proof}

\section{Necessary optimality conditions}\label{sec:nec}

The purpose of this section is to show that the  KKT condition defined below is necessary for optimality under $\partial^\infty$-quasi-normality or $\partial^\infty$-RCPLD.

\begin{definition}[KKT condition] \label{defi-kkt}\rm
Let $x^*\in\cF$. We say that $x^*$ is a KKT point of problem \eqref{eq:P0} if there exist multipliers $\lambda\in \Re^n$ and $\mu\in\Re^m$ such that
\begin{eqnarray*}
&&0\in \partial f(x^*) + \partial \Phi(x^*) + \sum_{i=1}^n \lambda_i\partial g_i(x^*) + \sum_{j=1}^m \partial (\mu_jh_j)(x^*),
\\
&& \lambda_i\geq0,\ \lambda_i g_i(x^*)=0 \ i=1,\ldots,n.
\end{eqnarray*}
\end{definition}



%

We first show that the KKT condition holds at a local minimizer under a weaker qualification condition.

\begin{lemma}\label{thm nopti}
Let $x^*$ be a local minimizer of problem \eqref{eq:P0}.  Suppose that BQ \eqref{cq2} holds at $x^*$ and
\begin{equation}\label{normalcone}
\cN_\cF(x^*)\subseteq \left\{\sum_{i\in \cI^*} \lambda_i\partial g_i(x^*) + \sum_{j=1}^m \partial (\mu_jh_j)(x^*): \lambda_i\geq0\ i\in \cI^*, \mu\in \Re^m\right\},
\end{equation}
where $\cI^*:=\cI_g(x^*)$. Then $x^*$ is a KKT point.
\end{lemma}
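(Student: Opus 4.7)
The plan is to invoke the generalized Fermat rule at $x^*$ (which minimizes $f+\Phi$ on $\cF$, equivalently minimizes $f+\Phi+\delta_\cF$ on $\Re^d$), then apply two sum rules to decompose $\partial(f+\Phi+\delta_\cF)(x^*)$, and finally use the hypothesis \eqref{normalcone} to express the resulting element of $\cN_\cF(x^*)$ in terms of the constraint multipliers.

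First I would observe that $0\in\partial(f+\Phi+\delta_\cF)(x^*)$. Since $f$ is Lipschitz around $x^*$, the Lipschitz sum rule of Proposition \ref{calculus}(i) applies and yields
\[
0\in\partial f(x^*)+\partial(\Phi+\delta_\cF)(x^*).
\]
To split $\partial(\Phi+\delta_\cF)(x^*)$ I would invoke Mordukhovich's non-Lipschitz sum rule (\cite[Corollary 10.9]{Rock98}), which requires the qualification condition $\partial^\infty\Phi(x^*)\cap(-\partial^\infty\delta_\cF(x^*))=\{0\}$. Since $\partial^\infty\delta_\cF(x^*)=\cN_\cF(x^*)$, this qualification is exactly BQ \eqref{cq2}. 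Hence the sum rule gives
\[
\partial(\Phi+\delta_\cF)(x^*)\subseteq\partial\Phi(x^*)+\cN_\cF(x^*),
\]
and so
\[
0\in\partial f(x^*)+\partial\Phi(x^*)+\cN_\cF(x^*).
\]

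To finish, I would use the inclusion \eqref{normalcone} to pick $\lambda_i\ge 0$ for $i\in\cI^*$ and $\mu\in\Re^m$ such that the $\cN_\cF(x^*)$ term can be written as $\sum_{i\in\cI^*}\lambda_i\partial g_i(x^*)+\sum_{j=1}^m\partial(\mu_jh_j)(x^*)$. Extending $\lambda$ by $\lambda_i=0$ for $i\notin\cI^*$ produces the required multipliers with $\lambda_i\ge 0$ and the complementarity $\lambda_i g_i(x^*)=0$ for all $i=1,\dots,n$, verifying Definition \ref{defi-kkt}.

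The only substantive step is the justification of the non-Lipschitz sum rule splitting $\Phi$ from $\delta_\cF$; this is where BQ \eqref{cq2} plays its essential role, and without it neither the subdifferential of $\Phi+\delta_\cF$ nor the subsequent KKT expression would be available. The other steps (Fermat's rule, Lipschitz sum rule, and the reparametrization of $\cN_\cF(x^*)$ via \eqref{normalcone}) are straightforward appeals to standard calculus.
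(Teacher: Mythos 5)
Your proposal is correct and follows essentially the same route as the paper's proof: Fermat's rule, the Lipschitz sum rule to peel off $f$, the non-Lipschitz sum rule under BQ \eqref{cq2} (correctly identified as the qualification $-\partial^\infty\Phi(x^*)\cap\cN_\cF(x^*)=\{0\}$ via $\partial^\infty\delta_\cF(x^*)=\cN_\cF(x^*)$), and then the representation \eqref{normalcone} with the zero extension of $\lambda$ to inactive indices. The only difference is that you spell out the final multiplier bookkeeping, which the paper leaves as ``immediate.''
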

\begin{proof}
It is clear that $x^*$ is a local minimizer of the problem
\begin{equation*}
\min \quad f(x)+\Phi(x)+\delta_\cF(x).
\end{equation*}
Then by Fermat's rule (see, e.g., \cite[Theorem 10.1]{Rock98}), we have
\[
0\in \partial f(x^*)+ \partial(\Phi+\delta_\cF)(x^*).
\]
Since BQ \eqref{cq2} holds at $x^*$, it then follows from the sum rule for  the limiting subdifferentials (see, e.g., \cite[Corollary 10.9]{Rock98}) and the relation $\partial\delta_\cF(x^*)=\cN_\cF(x^*)$ that
\begin{equation*}
0\in \partial f(x^*)+ \partial\Phi(x^*)+\cN_\cF(x^*).
\end{equation*}
This and \eqref{normalcone} imply the desired result immediately. \qquad $\square$
\end{proof}

The following result follows immediately from the fact that \eqref{normalcone} may be implied by the local error bound condition (e.g., \cite[Proposition 3.4]{int:loffe}).

\begin{corollary}\label{cor2}
Let $x^*$ be a local minimizer of problem \eqref{eq:P0}. If BQ \eqref{cq2} holds at $x^*$ and $\cF$  admits a local error bound at $x^*$, then $x^*$ is a KKT point.
\end{corollary}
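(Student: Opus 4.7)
The plan is to deduce the corollary directly from Lemma \ref{thm nopti}. Since BQ \eqref{cq2} is explicitly assumed, the only missing ingredient for the lemma is the normal cone estimate \eqref{normalcone}. The key observation is that the local error bound hypothesis at $x^*$ by itself already implies \eqref{normalcone}; once this inclusion is in hand, Lemma \ref{thm nopti} yields the KKT condition with no further work.

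To establish \eqref{normalcone} I would invoke \cite[Proposition 3.4]{int:loffe}, which asserts exactly that for constraint systems defined by Lipschitzian equality and inequality functions, a local error bound at $x^*$ forces
\[
\cN_\cF(x^*) \subseteq \Bigl\{\sum_{i\in \cI^*}\lambda_i\partial g_i(x^*) + \sum_{j=1}^m \partial(\mu_j h_j)(x^*) : \lambda_i\geq 0,\ \mu\in \Re^m\Bigr\}.
\]
The argument behind that proposition follows a standard template that can be sketched as follows. Given $\zeta \in \cN_\cF(x^*)$, pick $x^k\in \cF$ with $x^k\to x^*$ and regular normals $\zeta^k \in \widehat{\cN}_\cF(x^k)$ with $\zeta^k\to \zeta$. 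Each $x^k$ is a local minimizer of $x\mapsto -\langle \zeta^k, x\rangle + \delta_\cF(x)$. The local error bound, combined with Clarke's exact penalty principle, allows one to replace $\delta_\cF$ by the Lipschitz penalty $M(\|h(\cdot)\| + \|g(\cdot)_+\|)$ for $M$ large, so that $x^k$ becomes an unconstrained local minimizer of the penalized Lipschitzian objective. Fermat's rule together with the Lipschitzian sum rule and the max-function calculus from Proposition \ref{calculus}(iv) then produce multipliers $\lambda^k_i \geq 0$ (vanishing whenever $g_i(x^k) < 0$) and $\mu^k\in \Re^m$ satisfying $\zeta^k \in \sum_i \lambda^k_i \partial g_i(x^k) + \sum_j \partial(\mu^k_j h_j)(x^k)$.

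The main obstacle is the passage to the limit $k\to \infty$: the multiplier sequence $\{(\lambda^k,\mu^k)\}$ must be bounded, after which outer semi-continuity of the limiting subdifferential mapping delivers limiting multipliers $(\lambda,\mu)$ at $x^*$, with $\lambda$ supported on $\cI^*$ by continuity of $g$. Boundedness is forced by the error bound itself through the standard normalization argument: an unbounded sequence would, upon scaling by $t_k := \max_{i,j}\{\lambda^k_i, |\mu^k_j|\}$ and passing to a subsequence, produce a nontrivial $(\lambda,\mu)$ with $0\in \sum_i \lambda_i \partial g_i(x^*) + \sum_j \partial(\mu_j h_j)(x^*)$ and $\lambda_i\ge 0$, contradicting the nontriviality consequence of the error bound. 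Since this technical step is packaged inside the cited Ioffe proposition, the overall proof reduces to citing that proposition for \eqref{normalcone} and then invoking Lemma \ref{thm nopti}.
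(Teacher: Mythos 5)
Your proposal is correct and follows exactly the paper's own route: the local error bound yields the normal cone estimate \eqref{normalcone} via \cite[Proposition 3.4]{int:loffe}, and Lemma \ref{thm nopti} then gives the KKT condition under BQ \eqref{cq2}. The additional sketch of the penalization-plus-normalization argument behind the Ioffe--Outrata proposition is consistent with the standard proof but is not needed, since the paper (like you) simply cites that result.
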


Let us revisit Example \ref{example 1}  which is in a four dimensional space. Even in this low dimensional space, it is not easy to calculate the limiting normal cone of the constraint region and hence   BQ \eqref{cq2} is difficult to verify. For a constraint region involving many nonlinear constraints in high-dimensional spaces, it is almost impossible to calculate directly the limiting normal cone and thus BQ \eqref{cq2} is very difficult to verify. Fortunately, 
$\partial^\infty$-quasi-normality and $\partial^\infty$-RCPLD  are expressed in terms of the problem data explicitly and hence much easier to verify. The following result shows that these two proposed qualification conditions are  sufficient for the KKT condition to hold at a local minimizer.

\begin{theorem}\label{thm-opt}
Let $x^*$ be a local minimizer of problem \eqref{eq:P0}.  Assume that either $\partial^\infty$-quasi-normality holds at $x^*$ or  $\partial^\infty$-RCPLD holds at $x^*$ and $g,h$ are smooth around $x^*$. Then $x^*$ is a KKT point.
\end{theorem}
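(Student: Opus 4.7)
The plan is to reduce the theorem to Lemma~\ref{thm nopti} by checking its two hypotheses (namely BQ~\eqref{cq2} and the normal cone estimate~\eqref{normalcone}) separately under each of the two alternative qualification conditions. Once both hypotheses are verified, the lemma delivers the KKT condition and the theorem follows. The virtue of this approach is that it isolates the essentially technical content of the theorem into the two hypotheses of Lemma~\ref{thm nopti}, and that both can be extracted from results already proved earlier in the paper together with two external references.

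For the first case, assume $\partial^\infty$-quasi-normality at $x^*$. By Proposition~\ref{prop suff}(i), this condition already implies the standard quasi-normality of the constraint system $(g,h)$ together with BQ~\eqref{cq2}, so half of Lemma~\ref{thm nopti}'s hypotheses is free. To obtain~\eqref{normalcone}, I would invoke the enhanced normal cone representation~\eqref{enhm} that is known to follow from the standard quasi-normality via \cite[Proposition~4]{ye-zhang}. The right-hand side of~\eqref{enhm} is a subset of the right-hand side of~\eqref{normalcone} (the only difference being the extra sequential condition pinning down the admissible multipliers), so~\eqref{enhm} implies~\eqref{normalcone} trivially. Lemma~\ref{thm nopti} then closes the case.

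For the second case, assume $\partial^\infty$-RCPLD at $x^*$ with $g,h$ smooth around $x^*$. By Proposition~\ref{prop suff}(ii), this yields both the standard RCPLD and BQ~\eqref{cq2}. Under smoothness of $g,h$, \cite[Theorem~3.2]{Guo14} converts the standard RCPLD into exactly the gradient form~\eqref{respre} of the normal cone estimate, which is precisely~\eqref{normalcone} specialized to smooth constraints (since $\partial g_i(x^*)=\{\nabla g_i(x^*)\}$ and $\partial(\mu_j h_j)(x^*)=\{\mu_j\nabla h_j(x^*)\}$ in that setting). Applying Lemma~\ref{thm nopti} completes the proof. The main obstacle, if any, is not internal to this argument but rather the delicate step that converts a qualification condition into a normal cone inclusion of the form~\eqref{normalcone}; fortunately, both of the external references cited above handle precisely that delicate step, so the proof essentially reduces to a bookkeeping exercise that stitches Proposition~\ref{prop suff}, the two external estimates, and Lemma~\ref{thm nopti} together.
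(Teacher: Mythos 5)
Your proposal is correct and follows essentially the same route as the paper: invoke Proposition~\ref{prop suff} to pass to the standard quasi-normality (resp.\ standard RCPLD) plus BQ~\eqref{cq2}, derive the normal cone estimate~\eqref{normalcone} from \cite[Proposition~4]{ye-zhang} (resp.\ \cite[Theorem~3.2]{Guo14}), and conclude via Lemma~\ref{thm nopti}. The only difference is that you spell out the bookkeeping (e.g.\ that~\eqref{enhm} trivially implies~\eqref{normalcone}) which the paper leaves implicit.
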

\begin{proof}
By Proposition \ref{prop suff}, either the standard quasi-normality and BQ \eqref{cq2} or the standard RCPLD and BQ \eqref{cq2} hold. It then follows from \cite[Proposition 4]{ye-zhang} and \cite[Theorem 3.2]{Guo14} that condition \eqref{normalcone} holds. Thus, the desired result follows from Lemma \ref{thm nopti} immediately.  \qquad $\square$
\end{proof}


\begin{corollary}\label{cor4}
Let $x^*$ be a local minimizer of problem \eqref{eq:P0} and let  $g,h$ be linear. Suppose also that the following implication holds:
\begin{eqnarray}
&& 0\in  \partial^\infty \Phi(x^*)+\sum_{i\in\cI^*} \lambda_i\nabla g_i(x^*) + \sum_{j=1}^m \mu_j \nabla h_j(x^*),\ \lambda_i\geq0\ i\in\cI^*,\nonumber\\ [4pt]
&&\Longrightarrow \sum_{i\in\cI^*} \lambda_i\nabla g_i(x^*) + \sum_{j=1}^m \mu_j \nabla h_j(x^*)=0, \quad {\rm where}\ \cI^*:=\cI_g(x^*).\label{implic}
\end{eqnarray}
Then $x^*$ is a KKT point.
\end{corollary}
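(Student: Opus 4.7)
My plan is to reduce Corollary \ref{cor4} to Theorem \ref{thm-opt} by verifying that the hypothesis \eqref{implic}, combined with linearity of $g,h$, implies $\partial^\infty$-quasi-normality at $x^*$. The strategy is to exploit polyhedrality of the constraint region to get an explicit formula for $\cN_{\cF}(x^*)$, and then use this formula together with \eqref{implic} to check the basic qualification BQ \eqref{cq2}.

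First, since $g$ and $h$ are linear, $\cF$ is a polyhedral set, so by standard convex analysis (or the polyhedral case of \cite[Theorem 6.14]{Rock98})
\[
\cN_{\cF}(x^*)=\left\{\sum_{i\in\cI^*}\lambda_i\nabla g_i(x^*)+\sum_{j=1}^m\mu_j\nabla h_j(x^*):\ \lambda_i\geq 0\ i\in\cI^*,\ \mu\in\Re^m\right\}.
\]
Suppose now that $\zeta\in -\partial^\infty\Phi(x^*)\cap \cN_{\cF}(x^*)$. Writing $\zeta$ via the representation above, there exist $\lambda_i\geq 0$ for $i\in\cI^*$ and $\mu\in\Re^m$ with $\zeta=\sum_{i\in\cI^*}\lambda_i\nabla g_i(x^*)+\sum_{j=1}^m\mu_j\nabla h_j(x^*)$, and since $-\zeta\in\partial^\infty\Phi(x^*)$, this means
\[
0\in\partial^\infty\Phi(x^*)+\sum_{i\in\cI^*}\lambda_i\nabla g_i(x^*)+\sum_{j=1}^m\mu_j\nabla h_j(x^*),\quad \lambda_i\geq 0.
\]
Invoking the hypothesis \eqref{implic} gives $\sum_{i\in\cI^*}\lambda_i\nabla g_i(x^*)+\sum_{j=1}^m\mu_j\nabla h_j(x^*)=0$, i.e., $\zeta=0$. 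Hence BQ \eqref{cq2} holds at $x^*$.

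Next, since $g$ and $h$ are linear, the standard quasi-normality holds automatically at $x^*$ (as noted in the introduction, \cite[Proposition 3.1]{BertsekasJOTA2002}). Combining the standard quasi-normality with BQ, Proposition \ref{prop suff}(i) (applied in the smooth direction, which is available since linear maps are smooth) yields $\partial^\infty$-quasi-normality at $x^*$. Theorem \ref{thm-opt} then directly produces the KKT condition, completing the proof.

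The only subtle point is that \eqref{implic} is strictly weaker than $\partial^\infty$-NNAMCQ since it only forces the gradient combination to vanish rather than the individual multipliers; however, this vanishing is precisely what is needed to conclude $\zeta=0$, which is all BQ demands. Everything else is a routine assembly of Proposition \ref{prop suff}(i) and Theorem \ref{thm-opt}, so no real obstacle arises once the polyhedral normal-cone formula is in hand.
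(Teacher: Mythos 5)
Your proof is correct, but it reaches Theorem \ref{thm-opt} through a different door than the paper does. The paper verifies $\partial^\infty$-RCPLD directly from Definition \ref{defi}(b): condition (i) is automatic since $\nabla h_j$ is constant, and for condition (ii) it applies the implication \eqref{implic} to any subfamily (padding with zero multipliers) to conclude that the gradient combination vanishes with not-all-zero coefficients, whence $\{\nabla g_i(x),\nabla h_j(x): i\in\cI, j\in\cJ\}$ is linearly dependent at every $x$ because the gradients are constant. You instead verify $\partial^\infty$-quasi-normality: you use the exact polyhedral formula for $\cN_\cF(x^*)$ to deduce BQ \eqref{cq2} from \eqref{implic}, invoke the automatic validity of the standard quasi-normality for linear constraints (\cite[Proposition 3.1]{BertsekasJOTA2002}), and then apply the equivalence in Proposition \ref{prop suff}(i), which is available since linear maps are smooth. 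Both arguments are sound and of comparable length. The paper's route is slightly more self-contained, needing only the constancy of the gradients; yours leans on two external ingredients (the polyhedral normal-cone formula and the Bertsekas--Ozdaglar result) but has the minor advantage of making explicit that \eqref{implic} is exactly BQ in this polyhedral setting, which clarifies why the hypothesis is the natural one. No gap in either direction.
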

\begin{proof}
We first show that $\partial^\infty$-RCPLD holds at $x^*$. Since $h$ is linear, Definition \ref{defi}(b)(i) holds. It then suffices to show that  Definition \ref{defi}(b)(ii) holds. Let $\cJ$ be such that $\{\nabla h_j(x^*):j\in\cJ\}$ is a basis for  {the} ${\rm span}\,\{\nabla h_j(x^*):j=1,\ldots,m\}$ and $\cI\subseteq \cI^*$. Assume that there exist $\{\lambda_i:i\in \cI\}$ and $\{\mu_j:j\in\cJ\}$ not all zero such that
\[
0\in \partial^\infty \Phi(x^*)+\sum_{i\in\cI} \lambda_i\nabla g_i(x^*) + \sum_{j\in\cJ} \mu_j \nabla h_j(x^*),\ \lambda_i\geq0\ i\in\cI,
\]
which implies that $\sum\limits_{i\in\cI} \lambda_i\nabla g_i(x^*) + \sum\limits_{j\in\cJ} \mu_j \nabla h_j(x^*)=0$ by \eqref{implic}.  This means that the family of gradients $\{\nabla g_i(x), \nabla h_j(x):i\in \cI,j\in \cJ\}$ is linearly dependent for all $x$
since $g,h$ are linear. Thus, $\partial^\infty$-RCPLD holds at $x^*$ and then the desired result follows immediately from Theorem \ref{thm-opt}.  \qquad $\square$
\end{proof}

The following example illustrates the applicability of Corollary \ref{cor4}.

\begin{example}\rm
Consider the following problem
\begin{eqnarray*}
\begin{array}{rl}
\min \quad & f(x):= \sqrt{|x_1|}+\sqrt{|x_2|}\\[3pt]
{\rm s.t.}\quad & g_1(x):=x_1+x_2-1\geq 0,\\
& g_2(x):=x_1+x_2-1\le 0
\end{array}
\end{eqnarray*}
at a minimizer $x^*={(1,0)}$. By simple calculation, we have $\partial^\infty f(x^*)=\{0\}\times \Re$ and then
\[
0\in \partial^\infty f(x^*)-\lambda_1 \nabla g_1(x^*) +\lambda_2 \nabla g_2(x^*)
\]
implies that $\lambda_1=\lambda_2$. Thus {$-\lambda_1 \left(\begin{array}{c}1\\1\end{array}\right) +\lambda_2\left(\begin{array}{c}1\\1\end{array}\right) =\left(\begin{array}{c}0\\0\end{array}\right)$} and by Corollary \ref{cor4}, it then follows that $x^*$ is a KKT point. \qquad $\square$
\end{example}

Letting $\Phi$ be an indicator function of a closed subset $\Omega$ in $\Re^d$, i.e., $\Phi(x)=\delta_\Omega(x)$, the following result follows immediately from Theorem \ref{thm nopti}, which extends the result of \cite[Corollary 1]{RCPLD} to allow an extra abstract set constraint $x\in \Omega$ and the nonsmoothness of the objective function.

\begin{corollary}\label{cor3.1}
Let $x^*$ be a local minimizer of the nonlinear program
\begin{eqnarray*}
     \min_{x\in \Omega} && f(x) \nonumber\\
     {\rm s.t.}   && g(x)\leq0,\\
                  && h(x)=0,\nonumber
\end{eqnarray*}
where $f,g,h$ are defined as in problem \eqref{eq:P0} and $\Omega$ is a closed subset in $\Re^d$. Here we assume that $g,h$ are smooth around $x^*$. Suppose further that RCPLD holds at $x^*$, i.e., there exists $\delta>0$ such
that
\begin{itemize}
\item[\rm (i)] $\{\nabla h_j(x): j=1,\ldots,m\}$ has the same rank for each $x\in {\cal
B}_{\delta}(x^*)$;
\item[\rm (ii)] for each $\cI\subseteq \cI_g(x^*)$, if there exist $\{\lambda_i\geq0: i\in\cI\}$ and $\{\mu_j: j\in\cJ\}$ not all zero such that
    \begin{equation*}
 0\in  \sum_{i\in\cI} \lambda_i \nabla g_i(x^*) + \sum_{j\in\cJ} \mu_j \nabla h_j(x^*)+\cN_\Omega(x^*),
    \end{equation*}
    then $\{\nabla g_i(x), \nabla h_j(x): i\in \cI, j\in \cJ\}$ is
linearly dependent for each $x\in{\cal B}_{\delta}(x^*)$,
\end{itemize}
where $\cJ\subseteq\{1,\ldots,m\}$ is such that $\{\nabla h_j(x): j\in \cJ\}$ is a basis for {the} ${\rm span}\,\{\nabla h_j(x): j=1,\ldots,m\}$.
Then there exist multipliers $\lambda\in \Re^n$ and $\mu\in\Re^m$ such that
\begin{eqnarray*}
&&0\in \partial f(x^*) + \sum_{i=1}^n \lambda_i\nabla g_i(x^*) + \sum_{j=1}^m \mu_j\nabla h_j(x^*)+\cN_\Omega(x^*),
\\
&& \lambda_i\geq0,\ \lambda_i g_i(x^*)=0 \ i=1,\ldots,n.
\end{eqnarray*}
\end{corollary}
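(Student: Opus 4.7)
The plan is to specialize Theorem \ref{thm-opt} to the setting $\Phi(x) := \delta_\Omega(x)$ in problem (\ref{eq:P0}), so that the corollary becomes a direct instance of the theorem rather than a genuinely new result. With this choice, the nonlinear program in the corollary is literally an instance of (\ref{eq:P0}), and since $\Omega$ is closed, $\delta_\Omega$ is indeed a lower semi-continuous function, so the data hypotheses of (\ref{eq:P0}) are satisfied.

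First I would verify that the RCPLD hypothesis of the corollary coincides with $\partial^\infty$-RCPLD for (\ref{eq:P0}) at $x^*$. By \cite[Exercise 8.14]{Rock98}, $\partial^\infty \delta_\Omega(x^*) = \cN_\Omega(x^*)$, so the defining inclusion (\ref{Defn2.3}) with $\Phi = \delta_\Omega$ reads
$$ 0 \in \cN_\Omega(x^*) + \sum_{i\in\cI} \lambda_i \nabla g_i(x^*) + \sum_{j\in\cJ} \mu_j \nabla h_j(x^*),$$
which is precisely the inclusion in item (ii) of the corollary. Item (i) and the choice of $\cJ$ as a basis for $\mathrm{span}\{\nabla h_j(x^*) : j=1,\ldots,m\}$ are identical in both formulations. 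Hence the two RCPLD conditions are the same statement.

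Next I would apply Theorem \ref{thm-opt} directly: because $\partial^\infty$-RCPLD holds at $x^*$ and $g,h$ are smooth around $x^*$, the point $x^*$ is a KKT point of (\ref{eq:P0}) in the sense of Definition \ref{defi-kkt}. To rewrite this KKT inclusion in the form given by the corollary, I would use the identities $\partial \delta_\Omega(x^*) = \cN_\Omega(x^*)$, $\partial g_i(x^*) = \{\nabla g_i(x^*)\}$, and $\partial(\mu_j h_j)(x^*) = \{\mu_j \nabla h_j(x^*)\}$ (the last two from smoothness of $g$ and $h$); the complementarity condition $\lambda_i g_i(x^*) = 0$ transfers verbatim.

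There is essentially no obstacle: the corollary is a relabelling of Theorem \ref{thm-opt} in the abstract-constraint case, whose only subtle point is the identification $\partial^\infty \delta_\Omega = \partial \delta_\Omega = \cN_\Omega$ so that every occurrence of $\Phi$ in the $\partial^\infty$-RCPLD machinery reduces cleanly to $\cN_\Omega$. If anything merits checking, it is that Theorem \ref{thm-opt} indeed permits $f$ to be merely Lipschitz (not smooth) and $\Phi$ to be just lower semi-continuous, which is exactly the generality in which it was proved.
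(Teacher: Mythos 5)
Your proposal is correct and takes essentially the same route as the paper: the authors state Corollary \ref{cor3.1} as an immediate consequence of their earlier results after the identification $\partial^\infty\delta_\Omega(x^*)=\partial\delta_\Omega(x^*)=\cN_\Omega(x^*)$, which is exactly your specialization of Theorem \ref{thm-opt} (itself just Proposition \ref{prop suff}(ii) combined with Lemma \ref{thm nopti}) to $\Phi=\delta_\Omega$. The only cosmetic difference is that the paper's one-line remark cites the lemma rather than the theorem, but the underlying chain of reasoning is identical.
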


\section{Exact penalization} \label{sec:exact}

This section focuses on exact penalization for problem \eqref{eq:P0}. We first give an exact penalization result for a special case of problem \eqref{eq:P0} where $\Phi$ is the sum of a composite function of a separable lower semi-continuous function with a continuous function and an indictor function of a closed subset. To this end, we give a characterization of the regular subdifferential as follows. It can be shown easily by using the definition of the regular subdifferential and thus we omit the proof here.

\begin{lemma}\label{prop equi}
Let $\psi:\Re^d\to (-\infty, \infty]$ be lower semi-continuous and $x^*\in\Re^d$ be such that $\psi(x^*)$ is finite. Then $\hat{\partial}\psi(x^*)=\Re^d$ if and only if for any $M>0$, there exists $\delta>0$ such that
\[
\psi(x)-\psi(x^*) \geq M \|x-x^*\|\quad \forall x\in \cB_\delta(x^*).
\]
\end{lemma}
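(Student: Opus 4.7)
The plan is to prove both directions directly from the definition
\[
\hat{\partial}\psi(x^*)=\{v:\psi(x)\geq \psi(x^*)+v^T(x-x^*)+o(\|x-x^*\|)\ \forall x\},
\]
which is equivalent to requiring $\liminf_{x\to x^*,\,x\neq x^*}[\psi(x)-\psi(x^*)-v^T(x-x^*)]/\|x-x^*\|\geq 0$. The ``$\Leftarrow$'' direction is essentially a one-line calculation; the ``$\Rightarrow$'' direction needs a slightly sharper choice of test vector $v$ along a would-be bad sequence. Lower semi-continuity of $\psi$ is only used implicitly, since every inequality we manipulate involves only values of $\psi$ in a neighbourhood of $x^*$ where the function is finite.

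For the ``$\Leftarrow$'' direction, fix an arbitrary $v\in\Re^d$. I would apply the hypothesis with $M:=\|v\|+1$, obtaining $\delta>0$ such that $\psi(x)-\psi(x^*)\geq(\|v\|+1)\|x-x^*\|$ on $\cB_\delta(x^*)$. Then the Cauchy--Schwarz bound $v^T(x-x^*)\leq \|v\|\,\|x-x^*\|$ yields
\[
\psi(x)-\psi(x^*)-v^T(x-x^*)\geq \|x-x^*\|\qquad \forall x\in\cB_\delta(x^*),
\]
which is far stronger than the required $o(\|x-x^*\|)$ estimate. Hence $v\in\hat{\partial}\psi(x^*)$, and since $v$ was arbitrary, $\hat{\partial}\psi(x^*)=\Re^d$.

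For the ``$\Rightarrow$'' direction, I would argue by contradiction. If the claimed growth condition fails for some $M>0$, then there exists a sequence $x^k\to x^*$ with $x^k\neq x^*$ and $\psi(x^k)-\psi(x^*)<M\|x^k-x^*\|$. By compactness of the unit sphere, the unit vectors $u^k:=(x^k-x^*)/\|x^k-x^*\|$ have a subsequence converging to some $u$ with $\|u\|=1$; pass to this subsequence. The key trick is to apply the subdifferential inequality to the vector $v:=(M+1)u\in\hat{\partial}\psi(x^*)=\Re^d$. Along the chosen sequence one has $v^T(x^k-x^*)/\|x^k-x^*\|=(M+1)\,u^T u^k\to M+1$, and the definition of the regular subdifferential forces
\[
\liminf_{k\to\infty}\frac{\psi(x^k)-\psi(x^*)}{\|x^k-x^*\|}\geq M+1,
\]
which contradicts $\psi(x^k)-\psi(x^*)<M\|x^k-x^*\|$ for all $k$.

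I expect no significant obstacle here: the whole argument hinges on remembering that ``$\hat\partial\psi(x^*)=\Re^d$'' is a very strong condition, equivalent to superlinear growth at $x^*$, and picking the test vector $v$ so that it aligns with a putative bad direction. The only mild care needed is to select a convergent subsequence of the unit directions $u^k$ before applying the subdifferential inequality to $v=(M+1)u$, and to note that $x^k\neq x^*$ is automatic from the strict inequality in the contradiction hypothesis.
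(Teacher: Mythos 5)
Your proof is correct, and it follows exactly the route the paper indicates: the paper omits its own proof, stating only that the lemma ``can be shown easily by using the definition of the regular subdifferential,'' which is precisely what you do in both directions (the choice $M=\|v\|+1$ for sufficiency, and the aligned test vector $v=(M+1)u$ along a convergent subsequence of unit directions for necessity). The only cosmetic point is that your Cauchy--Schwarz step and the identity $u^Tu=1$ presume the Euclidean norm, but since the growth condition and the $o(\|x-x^*\|)$ remainder are invariant under equivalent norms on $\Re^d$, this costs nothing.
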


We are now ready to give the first main result on exact penalization.

\begin{theorem}\label{thm exact}
Assume that $x^*$ is a local minimizer of problem \eqref{eq:P0} where $$\Phi(x):=\sum_{i=1}^s \phi_i(\omega_i(x))+\delta_\Omega(x).$$ Here $\Omega$ is a closed subset in $\Re^d$ and for any $i=1,\ldots,s$, $\phi_i:\Re\to\Re$ is lower semi-continuous and $\omega_i(x):\Re^d\to\Re$ is continuous.  Let $t^*:=\omega(x^*)$, $\cI:=\{i: \partial^\infty \phi_i(t_i^*)=\{0\}\}$, and $\cI^c$ be the complement of $\cI$ with respect to $\{1,\ldots,s\}$. Assume further that $\hat{\partial} \phi_i(t^*_i)=\Re$ for any $i\in \cI^c$ and the following restricted system with respect to $(x,t)$:
\begin{equation*}
\left\{\begin{array}{l}
g(x)\leq0,\ h(x)=0,\ x\in \Omega,\\
w_i(x)-t_i=0\ i=1,\ldots,s,\\
t_i-t_i^*=0 \ i\in\cI^c
\end{array}\right.
\end{equation*}
admits a local error bound at $(x^*,t^*)$.  Then there exists $\rho_0>0$ such that for any $\rho\geq\rho_0$, $x^*$ is also a local minimizer of the exact penalization problem
\begin{eqnarray*}
\min_{x\in\Omega} \quad f(x)+\sum_{i=1}^s \phi_i(\omega_i(x)) + \rho \left(\|g(x)_+\| + \|h(x)\|\right).
\end{eqnarray*}
\end{theorem}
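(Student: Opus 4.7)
\medskip
\noindent\textbf{Proof plan.} My strategy is to ``lift'' the problem by introducing slack variables $t_i = \omega_i(x)$, exploit the local error bound for the restricted system in $(x,t)$-space to find a nearby feasible point $(\bar{x},\bar{t})$ for the original problem, and then use two very different mechanisms to control the two groups of indices $\cI$ and $\cI^c$ separately. The rough inequality I am aiming for is: for $x\in\Omega$ near $x^*$, setting $t=\omega(x)$,
\[
f(x^*)+\sum_{i=1}^s\phi_i(t_i^*)\ \le\ f(\bar x)+\sum_i\phi_i(\bar t_i)\ \le\ f(x)+\sum_i\phi_i(\omega_i(x))+\rho\bigl(\|g(x)_+\|+\|h(x)\|\bigr),
\]
where the first inequality comes from local optimality of $x^*$ applied to $\bar x$ (which is feasible for the original problem), and the second from estimating the discrepancy using the error bound.

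\medskip
\noindent\textbf{Key steps.} First, I note that $f$ is Lipschitz around $x^*$ with some constant $L_f$, and by \cite[Theorem 9.13]{Rock98} the condition $\partial^\infty\phi_i(t_i^*)=\{0\}$ for $i\in\cI$ means each such $\phi_i$ is Lipschitz around $t_i^*$ with some constant $L_i$. Next, by Lemma~\ref{prop equi}, the hypothesis $\hat{\partial}\phi_i(t_i^*)=\Re$ for $i\in\cI^c$ gives the crucial ``spike'' property: for any prescribed $M>0$ there is $\delta_M>0$ such that
\[
\phi_i(\tau)-\phi_i(t_i^*)\ \ge\ M\,|\tau-t_i^*|\qquad \forall\,|\tau-t_i^*|\le\delta_M,\ i\in\cI^c.
\]
Then, using the assumed local error bound for the restricted system with constants $\delta,\kappa>0$: given $x\in\Omega$ close to $x^*$, I set $t_i:=\omega_i(x)$ (so that the slacks are satisfied for free) and find $(\bar x,\bar t)$ feasible for the restricted system with
\[
\|x-\bar x\|+\sum_i|t_i-\bar t_i|\ \le\ c\kappa\Bigl(\|g(x)_+\|+\|h(x)\|+\sum_{i\in\cI^c}|\omega_i(x)-t_i^*|\Bigr),
\]
for a fixed norm-equivalence constant $c$; the terms $\dist_\Omega(x)$ and $|\omega_i(x)-t_i|$ vanish by construction. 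In particular $\bar x\in\cF$ and $\omega_i(\bar x)=\bar t_i=t_i^*$ for $i\in\cI^c$.

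\medskip
\noindent\textbf{Combining the estimates.} Local optimality of $x^*$ applied to $\bar x$ yields $f(x^*)+\sum_i\phi_i(t_i^*)\le f(\bar x)+\sum_{i\in\cI}\phi_i(\bar t_i)+\sum_{i\in\cI^c}\phi_i(t_i^*)$. I then bound $f(\bar x)\le f(x)+L_f\|\bar x-x\|$ and, for $i\in\cI$, $\phi_i(\bar t_i)\le\phi_i(\omega_i(x))+L_i|\bar t_i-\omega_i(x)|$, using the Lipschitz estimates. For $i\in\cI^c$ I use the spike inequality to write $\phi_i(t_i^*)\le\phi_i(\omega_i(x))-M|\omega_i(x)-t_i^*|$. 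Substituting into the chain, the error-bound inequality absorbs all the Lipschitz remainder terms into a multiple of $\|g(x)_+\|+\|h(x)\|+\sum_{i\in\cI^c}|\omega_i(x)-t_i^*|$, while the spike inequality contributes $-M\sum_{i\in\cI^c}|\omega_i(x)-t_i^*|$. Choosing $M$ larger than $c\kappa\max\{L_f,L_i:i\in\cI\}$ makes the net coefficient on the $\cI^c$-terms nonpositive, and setting $\rho_0:=c\kappa\max\{L_f,L_i:i\in\cI\}$ then yields the desired inequality for every $\rho\ge\rho_0$.

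\medskip
\noindent\textbf{Main obstacle.} The delicate point is the treatment of indices $i\in\cI^c$: the objective contribution $\phi_i(\omega_i(\cdot))$ is genuinely non-Lipschitz at $x^*$, so Clarke's classical exact penalty principle does not apply directly. The proof's trick is that the spike property from $\hat{\partial}\phi_i(t_i^*)=\Re$ dominates any finite Lipschitz slope pulled out of the error bound, which is exactly why $|\omega_i(x)-t_i^*|$ can be included as a slack in the restricted system without needing to penalize it. Verifying the bookkeeping, i.e.\ that $M$ can be chosen uniformly large enough to absorb all Lipschitz constants and the error-bound constant $c\kappa$ simultaneously on a common neighborhood of $x^*$, is the main technical item to execute carefully.
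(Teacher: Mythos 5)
Your proposal is correct and follows essentially the same route as the paper: lift to the $(x,t)$-space auxiliary problem whose objective keeps only the Lipschitz terms $f$ and $\phi_i$, $i\in\cI$, apply the local error bound of the restricted system with $t=\omega(x)$, and let the ``spike'' inequality from Lemma~\ref{prop equi} absorb the residual $\sum_{i\in\cI^c}|\omega_i(x)-t_i^*|$. The only cosmetic difference is that you unroll Clarke's exact penalization principle by hand (projecting to a nearby feasible point and using the Lipschitz constants $L_f$, $L_i$ directly), whereas the paper invokes \cite[Proposition 2.4.3]{c} as a black box on the Lipschitz function $\Pi(x,t)=f(x)+\sum_{i\in\cI}\phi_i(t_i)$.
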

\begin{proof}
Since $x^*$ is a local minimizer of problem \eqref{eq:P0}, it is not difficult to see that $(x^*,t^*)$ is a local minimizer of the following problem:
\begin{eqnarray}\label{auxi p}
\begin{array}{rl}
\min\limits_{x\in \Omega} & \Pi(x,t):=f(x)+\sum\limits_{i\in\cI} \phi_i(t_i)\\[3pt]
{\rm s.t.} & g(x)\leq0,\ h(x)=0,\\[2pt]
           & w_i(x)-t_i=0\quad i=1,\ldots,s,\\[2pt]
           & t_i-t_i^*=0\quad i\in \cI^c.
\end{array}
\end{eqnarray}
We observe that $\Pi$ is Lipschitz around $(x^*,t^*)$ and denote by $L_\Pi$ the Lipschitz constant. Then by Clarke's exact penalization principle \cite[Proposition 2.4.3]{c}, there exists $\delta_1>0$ such that
\begin{eqnarray}\label{lip}
\Pi(x^*,t^*) \leq  \Pi(x,t) + L_\Pi {\rm dist}_{\cF'}(x,t)\quad \forall (x,t)\in \cB_{\delta_1}(x^*,t^*)\cap (\Omega\times \Re^s),
\end{eqnarray}
where $\cF'$ denotes the constraint region of problem \eqref{auxi p}. Since $\cF'$ admits a local error bound at $(x^*,t^*)$, there exist $\delta_2\in(0,\delta_1)$ and $\kappa>0$ such that  for any $(x,t)\in \cB_{\delta_2}(x^*,t^*)\cap (\Omega\times \Re^s)$,
\begin{eqnarray*}\label{err}
{\rm dist}_{\cF'}(x,t) \leq \kappa \left(\sum_{i=1}^s |w_i(x)-t_i| + \sum_{i\in\cI^c}|t_i-t_i^*| + \|g(x)_+\|_1 + \|h(x)\|_1\right).
\end{eqnarray*}
{For simplicity, the above local error bound is expressed under the $\ell_1$ norm.} This together with  \eqref{lip} implies that for any $(x,t)\in \cB_{\delta_2}(x^*,t^*)\cap (\Omega\times \Re^s)$,
\begin{equation}\label{au pen}
\begin{array}{l}
\Pi(x^*,t^*) \leq \Pi(x,t) + \\[2pt]
\qquad\qquad\ \kappa L_\Pi \left(\sum\limits_{i=1}^s |w_i(x)-t_i| + \sum\limits_{i\in\cI^c}|t_i-t_i^*| + \|g(x)_+\|_1 + \|h(x)\|_1\right).
\end{array}
\end{equation}
Due to the continuity of function $w$, we may choose $\delta_3\in(0,\delta_2)$ such that $(x,w(x))\in \cB_{\delta_2}(x^*,t^*)$ for any $x\in \cB_{\delta_3}(x^*)$. Thus, by letting $t=w(x)$ in \eqref{au pen}, it follows that for any $x\in\cB_{\delta_3}(x^*)\cap \Omega$,
\begin{equation}\label{au pen1}
\Pi(x^*,t^*) \leq \Pi(x,w(x)) + \kappa L_\Pi \left(\sum_{i\in\cI^c}|w_i(x)-t_i^*| + \|g(x)_+\|_1 + \|h(x)\|_1\right).
\end{equation}
Since $\hat{\partial} \phi_i (t_i^*)=\Re$ for any $i\in \cI^c$, it then follows from Lemma \ref{prop equi} and the continuity of $w$ that there exists $\delta\in(0,\delta_3)$ such that
\begin{equation*}
\phi_i(w_i(x))-\phi_i(t_i^*)\geq \kappa L_\Pi|w_i(x)-t_i^*|\quad \forall x\in\cB_{\delta}(x^*)\ \forall i\in\cI^c.
\end{equation*}
This and \eqref{au pen1} imply that for any $x\in\cB_{\delta}(x^*)\cap \Omega$,
\begin{eqnarray*}
f(x^*)+\sum_{i=1}^s\phi_i(t_i^*) &=& \Pi(x^*,t^*)+ \sum_{i\in\cI^c}\phi_i(t_i^*)\\
                 &\leq& \Pi(x,w(x)) + \sum_{i\in\cI^c}\phi_i(w_i(x))+ \sum_{i\in\cI^c}\phi_i(t_i^*)-\sum_{i\in\cI^c}\phi_i(w_i(x))\\
                 &&+\kappa L_\Pi\left(\sum_{i\in\cI^c}|w_i(x)-t_i^*| + \|g(x)_+\|_1 + \|h(x)\|_1\right)\\
                 &\leq& f(x)+\sum_{i=1}^s\phi_i(w_i(x)) -\kappa L_\Pi\sum_{i\in\cI^c}|w_i(x)-t_i^*|\\
                 && + \kappa L_\Pi\left(\sum_{i\in\cI^c}|w_i(x)-t_i^*|+ \|g(x)_+\|_1+\|h(x)\|_1\right)\\
                 &=& f(x)+\sum_{i=1}^s\phi_i(w_i(x))+ \kappa L_\Pi\left( \|g(x)_+\|_1+\|h(x)\|_1\right).
\end{eqnarray*}
Then the desired result follows immediately by the equivalence of all norms in finite dimensional spaces. \qquad $\square$
\end{proof}

It should be noted that Theorem \ref{thm exact} can be applied to a class of sparse optimization problems. For the widely used bridge penalty $\phi(t)=|t|^p$ with $p\in (0,1)$ in the sparse optimization literature, it is easy to see that $\phi$ is not Lipschitz around $t^*=0$. However, it is not hard to verify that $\hat{\partial}\phi(t^*)=\Re$ and thus this bridge penalty function is a suitable outer function required in Theorem \ref{thm exact}. In the following, we give some exact penalization results for problem \eqref{eq:P0} where the objective function is related to the bridge penalty function.

The following result shows that the problem considered in \cite{Chen13} with an extra abstract constraint set which is the union of finitely many polyhedral sets admits an exact penalization.

\begin{corollary}\label{cor1}
Assume that $x^*$ is a local minimizer of problem \eqref{eq:P0} where $$\Phi(x):=\sum_{i=1}^s |a_i^Tx|^p+\delta_\Omega(x)$$ with $a_i\in\Re^d$, $p\in (0,1)$, and  $\Omega\subseteq \Re^d$ which is the union of finitely many polyhedral sets. Assume further that $g,h$ are linear. Then there exists $\rho_0>0$ such that for any $\rho\geq \rho_0$, $x^*$ is also a local minimizer of the exact penalization problem
\begin{eqnarray*}
\min_{x\in\Omega} \quad f(x)+ \sum_{i=1}^s |a_i^Tx|^p +\rho \left(\|g(x)_+\|+\|h(x)\|\right).
\end{eqnarray*}
\end{corollary}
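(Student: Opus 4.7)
The plan is to apply Theorem \ref{thm exact} with the identifications $\phi_i(t):=|t|^p$ and $\omega_i(x):=a_i^Tx$ for $i=1,\ldots,s$. Both ingredients satisfy the basic hypotheses of that theorem: $\phi_i$ is continuous (hence lsc) on $\Re$, and $\omega_i$ is linear (hence continuous) on $\Re^d$. Setting $t^*:=\omega(x^*)$, the function $\phi_i$ is smooth, in particular locally Lipschitz, at every $t_i^*\ne 0$, so $\partial^\infty\phi_i(t_i^*)=\{0\}$ there; consequently $\cI^c=\{i:t_i^*=0\}=\{i:a_i^Tx^*=0\}$. It remains to verify the two non-trivial hypotheses of Theorem \ref{thm exact}: that $\hat{\partial}\phi_i(t_i^*)=\Re$ for each $i\in\cI^c$, and that the restricted constraint system admits a local error bound at $(x^*,t^*)$.

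For the first hypothesis I would invoke Lemma \ref{prop equi}: it suffices to check that for every $M>0$ one has $|t|^p\geq M|t|$ on some neighborhood of $0$. Taking $\delta:=M^{-1/(1-p)}$ yields $|t|^{p-1}\geq M$ for $0<|t|\leq\delta$, so $|t|^p\geq M|t|$ on $[-\delta,\delta]$ (the case $t=0$ being trivial), which confirms $\hat{\partial}\phi_i(0)=\Re$ as required.

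For the second hypothesis, write $\Omega=\bigcup_{k=1}^K\Omega_k$ as a union of finitely many polyhedra and set $\cK:=\{k:x^*\in\Omega_k\}$. Since each $\Omega_k$ is closed, one may fix $\delta>0$ small enough that $\cB_\delta(x^*)\cap\Omega_k=\emptyset$ for every $k\notin\cK$, so any $x\in\Omega\cap\cB_\delta(x^*)$ already lies in some $\Omega_k$ with $k\in\cK$. For each such $k$, the set
\begin{equation*}
\cF'_k:=\bigl\{(x,t):g(x)\leq0,\,h(x)=0,\,x\in\Omega_k,\,a_i^Tx=t_i\ \forall i,\ t_i=t_i^*\ \forall i\in\cI^c\bigr\}
\end{equation*}
is a polyhedron in $\Re^d\times\Re^s$ containing $(x^*,t^*)$, since $g$ and $h$ are linear. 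Classical Hoffman's lemma then supplies $\kappa_k>0$ such that
\begin{equation*}
\dist_{\cF'_k}(x,t)\leq\kappa_k\Bigl(\|g(x)_+\|_1+\|h(x)\|_1+\sum_{i=1}^s|a_i^Tx-t_i|+\sum_{i\in\cI^c}|t_i-t_i^*|\Bigr)
\end{equation*}
for every $(x,t)\in\Omega_k\times\Re^s$ (the $\Omega_k$-violation term is absent because $x\in\Omega_k$). Since $\cF'_k\subseteq\cF'$, setting $\kappa:=\max_{k\in\cK}\kappa_k$ yields the required local error bound on $\cB_\delta(x^*,t^*)\cap(\Omega\times\Re^s)$.

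With both hypotheses verified, Theorem \ref{thm exact} produces $\rho_0>0$ such that $x^*$ is a local minimizer of the stated exact penalization problem for every $\rho\geq\rho_0$. The main delicate point is the second step: patching Hoffman's constants across the finitely many polyhedral pieces of $\Omega$ intersecting a neighborhood of $x^*$. This is routine once one observes that $\Omega$ is \emph{locally} a union of only the pieces containing $x^*$, but it is the place where the hypothesis that $\Omega$ be a finite union of polyhedra (and not merely a closed set) is used decisively.
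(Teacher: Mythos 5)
Your proof is correct and follows the same overall strategy as the paper: reduce to Theorem \ref{thm exact} with $\phi_i(t)=|t|^p$ and $\omega_i(x)=a_i^Tx$, check $\hat{\partial}\phi_i(0)=\Re$, and establish the local error bound for the restricted system from polyhedrality. Your verification of $\hat{\partial}\phi_i(0)=\Re$ via Lemma \ref{prop equi} with $\delta=M^{-1/(1-p)}$ is more explicit than the paper's (which simply asserts it). The one step where you genuinely diverge is the error bound: the paper forms the parametric (perturbed) constraint set, observes it is a finite union of polyhedral sets, and cites Robinson's corollary \cite{Robinson81} on the upper Lipschitz continuity of polyhedral multifunctions, whereas you decompose $\Omega=\bigcup_k\Omega_k$, discard the pieces not containing $x^*$ on a small ball, apply Hoffman's lemma to each remaining polyhedron $\cF'_k$ (correctly noting that $(x^*,t^*)\in\cF'_k$ guarantees nonemptiness and that the $\Omega_k$-residual vanishes for $x\in\Omega_k$), and take the maximum of the finitely many constants. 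The two mechanisms are essentially the same fact --- Robinson's result is the multifunction form of exactly this Hoffman-plus-patching argument --- so your version buys self-containedness at the cost of a few lines, while the paper's buys brevity by citation. Both are valid.
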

\begin{proof}
Let $\phi(t):=|t|^p$ and $t_i^*:=a_i^T x^*\ i=1,\ldots,s$. It is easy to verify that
\[\cI:=\{i: \partial^\infty \phi(t_i^*)=\{0\}\}=\{i: t_i^*\neq0\}\]
and $\hat{\partial}\phi(t_i^*)=\Re$ for any $i\in \cI^c$ where $\cI^c$ is the complement of $\cI$ with respect to $\{1,\ldots,s\}$. {Since the constraint set
\[
\left\{
(x,t,p):\begin{array}{l} g(x)+p^g\leq0,\ h(x)+p^h=0,\ t_i-t_i^*+p^t_i=0\  i\in \cI^c\\[2pt] a_i^T x-t_i+p^a_i=0\ i=1,\ldots,s,\ x\in \Omega  \end{array}
\right\}
\]
is the union of finitely many polyhedral sets, it then follows from the corollary in \cite[Page 210]{Robinson81} that the local error bound condition holds everywhere for the constraint set
\[
\left\{
(x,t)\in \Omega\times \Re^s:\begin{array}{l} g(x)\leq0,\ h(x)=0,\ t_i-t_i^*=0\  i\in \cI^c\\[2pt] a_i^T x-t_i=0\ i=1,\ldots,s\end{array}
\right\}.
\]
}
Then the desired result follows immediately from Theorem \ref{thm exact}.  \qquad $\square$
\end{proof}

We next give an exact penalization result for a problem which is more general than the one considered in \cite{YLiu}.

\begin{corollary}
Assume that $x^*$ is a local minimizer of problem \eqref{eq:P0} where $$\Phi(x):=\sum_{i=1}^s [(b_i-a_i^Tx)_+]^p+\delta_\Omega(x)$$ with $a_i\in\Re^d,b_i\in\Re$, $p\in(0,1)$, and $\Omega\subseteq \Re^d$ which is the union of finitely many polyhedral sets.  Assume further that $g,h$ are linear. Then there exists $\rho_0>0$ such that for any $\rho\geq \rho_0$, $x^*$ is also a local minimizer of the exact penalization problem
\begin{eqnarray*}
\min_{x\in\Omega} \quad f(x)+ \sum_{i=1}^s [(b_i-a_i^Tx)_+]^p +\rho \left(\|g(x)_+\|+\|h(x)\|\right).
\end{eqnarray*}
\end{corollary}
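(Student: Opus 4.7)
The plan is to verify the hypotheses of Theorem \ref{thm exact} under the decomposition
\[
\Phi(x) = \sum_{i=1}^s \phi_i(\omega_i(x)) + \delta_\Omega(x),\qquad
\omega_i(x) := (b_i - a_i^T x)_+,\qquad
\phi_i(t) := \begin{cases} t^p & t\ge 0,\\ +\infty & t<0.\end{cases}
\]
Each $\omega_i$ is continuous (in fact Lipschitz) and each $\phi_i$ is lower semi-continuous, with $\phi_i(\omega_i(x)) = [(b_i - a_i^T x)_+]^p$, so the structural setup of Theorem \ref{thm exact} is in place.

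Setting $t_i^* := \omega_i(x^*)$, I would split the indices according to whether $a_i^T x^* < b_i$ (so $t_i^* > 0$) or $a_i^T x^* \ge b_i$ (so $t_i^* = 0$). In the first case $\phi_i$ is smooth near $t_i^*$, giving $\partial^\infty \phi_i(t_i^*) = \{0\}$ and placing $i \in \cI$. In the second case $i \in \cI^c$ and I need $\hat{\partial}\phi_i(0) = \Re$; by Lemma \ref{prop equi} it suffices to show $\phi_i(t) \ge M|t|$ on some neighborhood of $0$ for every $M > 0$. This is immediate: $\phi_i(t) = +\infty$ for $t < 0$, and for $t > 0$ small one has $\phi_i(t)/t = t^{p-1} \to +\infty$ since $p < 1$.

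The main step is verifying that the restricted system
\[
g(x)\le 0,\ h(x)=0,\ x\in\Omega,\quad (b_i - a_i^T x)_+ - t_i = 0\ (i=1,\dots,s),\quad t_i = 0\ (i\in\cI^c)
\]
admits a local error bound at $(x^*, t^*)$. Here I would observe that, on a small neighborhood of $(x^*, t^*)$, this system is equivalent to the polyhedral system
\[
g(x)\le 0,\ h(x)=0,\ x\in\Omega,\quad a_i^T x + t_i = b_i\ (i\in\cI),\quad a_i^T x \ge b_i,\ t_i = 0\ (i\in\cI^c),
\]
because for $i\in\cI$ the inequality $t_i^* > 0$ forces $b_i - a_i^T x > 0$ locally, while for $i\in\cI^c$ the pair $t_i = 0$ and $(b_i - a_i^T x)_+ = t_i$ is equivalent to $a_i^T x \ge b_i$ together with $t_i = 0$. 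The residuals of the two systems are equivalent up to a constant on this neighborhood. Since $g,h$ are linear and $\Omega$ is a finite union of polyhedra, the rewritten system defines a finite union of polyhedral sets in $(x,t)$-space. The Robinson-type global error bound (the corollary on page 210 of \cite{Robinson81}, as used in the proof of Corollary \ref{cor1}) applied to each polyhedral piece then supplies the required local error bound.

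With every hypothesis of Theorem \ref{thm exact} verified, I would conclude the existence of $\rho_0 > 0$ such that for every $\rho \ge \rho_0$, $x^*$ is a local minimizer of
\[
\min_{x\in\Omega}\ f(x) + \sum_{i=1}^s[(b_i - a_i^T x)_+]^p + \rho\bigl(\|g(x)_+\| + \|h(x)\|\bigr),
\]
which is the desired statement. The only subtle step is the local polyhedral reduction of the system containing the non-smooth maps $(b_i - a_i^T x)_+$; once that is done, Robinson's bound carries everything through in parallel with Corollary \ref{cor1}.
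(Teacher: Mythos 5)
Your proof is correct and follows the same overall strategy as the paper: decompose $\Phi$ as in Theorem \ref{thm exact} with $\omega_i(x):=(b_i-a_i^Tx)_+$, check via Lemma \ref{prop equi} that the outer functions have full regular subdifferential at the components where $t_i^*=0$, and obtain the restricted local error bound from Robinson's polyhedral result. Two small differences are worth noting. First, you take $\phi_i(t)=t^p$ for $t\ge 0$ and $+\infty$ for $t<0$, which is extended-valued, whereas Theorem \ref{thm exact} is stated for real-valued $\phi_i:\Re\to\Re$; the paper instead uses $\phi_i(t)=|t|^p$, which composed with $\omega_i\ge 0$ yields the same $\Phi$ and satisfies the theorem's hypotheses literally. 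Your choice would also work if one reran the proof of the theorem (only the indices in $\cI$ enter the Lipschitz function $\Pi$, and there $t_i^*>0$ so $\phi_i$ is smooth), but using $|t|^p$ avoids having to justify anything. Second, for the error bound you perform a local reduction of the system to a genuinely polyhedral one and argue that the residuals of the two systems are comparable up to constants; the paper avoids this by observing directly that the parametric counterpart of the original system, with $(b_i-a_i^Tx)_+-t_i+p_i^{+}=0$ kept as is, already has a graph that is a union of finitely many polyhedral sets (split each plus function into its two linear branches), so the corollary on page 210 of \cite{Robinson81} applies with no local equivalence argument. Your residual-comparison step is valid but is the one place where care is required; the paper's route sidesteps it entirely.
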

\begin{proof}
Let $\phi(t):=|t|^p$ and $t_i^*:=(b_i-a_i^Tx^*)_+\ i=1,\ldots,s$. Using the same notations $\cI,\cI^c$ as in the proof of Corollary \ref{cor1}, it suffices to investigate the local error bound condition for the constraint set
\[
\left\{(x,t)\in\Omega\times \Re^s: \begin{array}{l}g(x)\leq0,\ h(x)=0,\ \ t_i-t_i^*=0\  i\in \cI^c\\[2pt] (b_i-a_i^Tx)_+-t_i=0\  i=1,\ldots,s\end{array}\right\}.
\]
It is easy to see that the parametric counterpart of the above set
\[
\left\{(x,t,p): \begin{array}{l}g(x)+p^g\leq0,\ h(x)+p^h=0,\ \ t_i-t_i^*+p_i^t=0\  i\in \cI^c\\[2pt] (b_i-a_i^Tx)_+-t_i+p^{+}_i=0\  i=1,\ldots,s, \ x\in \Omega\end{array}\right\}
\]
is the  union of finitely many polyhedral sets. Thus by the corollary in \cite[Page 210]{Robinson81}, the desired local error bound condition is satisfied. The proof is complete by applying Theorem \ref{thm exact}. \qquad $\square$
\end{proof}

In the rest of this section, we investigate sufficient conditions ensuring an exact penalization for problem \eqref{eq:P0} where $\Phi$ is the sum of a continuous function and an indictor function of a closed subset. In particular,  we investigate exact penalization for the following problem:
\begin{eqnarray}\label{newp}
\min_{x\in \Omega} && f(x)+\Psi(x)\nonumber\\
{\rm s.t.}  && g(x)\leq0,\\
           &&  h(x)=0,\nonumber
\end{eqnarray}
where $f,g,h$ are defined as in problem \eqref{eq:P0}, $\Psi:\Re^d\to\Re$ is a continuous function, and $\Omega$ is a closed subset in $\Re^d$. As discussed in Section 1, when the objective function of a nonlinear program is locally Lipschitz, the admittance of the local error bound for its constraint region is sufficient to ensure an exact penalization. For this purpose, we introduce the following auxiliary problem where the objective function is locally Lipschitz:
\begin{eqnarray}\label{auxi pnew}
\min_{(x,y)\in \Omega\times\Re}        && f(x)+y\nonumber\\
{\rm s.t.} && \Psi(x)-y=0,\\
           && g(x)\leq0, \ h(x)=0.\nonumber
\end{eqnarray}
In the case where $\Omega=\Re^d$, the constraint region of problem  (\ref{auxi pnew}) can be rewritten as
\[
\Lambda=\{(x,y)\in {\rm gph}\,\Psi: g(x)\leq0,\ h(x)=0\}.
\]
We observe that by using the definition of the coderivative, the inclusion
$$ 0\in D^*\Psi(x)(0)+\sum_{i\in\cI_g(x)} \lambda_i\partial g_i(x) + \sum_{j=1}^m \partial (\mu_jh_j)(x)$$
can be rewritten as
$$(0,0)\in \sum_{i\in\cI_g(x)}\left(\begin{array}{c}\lambda_i\partial g_i(x)\\ 0\end{array}\right)+ \sum_{j=1}^m \left(\begin{array}{c}\partial (\mu_jh_j)(x)\\ 0 \end{array}\right) +\cN_{{\rm gph}\Psi} (x, \Psi(x)).$$
Hence if $x^*$ is $D^*$-quasinormal for problem (\ref{newp}), then $(x^*,\Psi(x^*))$ is quasinormal for problem (\ref{auxi pnew}). Moreover, ${\rm gph}\,\Psi$ is a closed subset in $\Re^{d+1}$ by the continuity of $\Psi$. These and the local Lipschitzness of $g,h$ enable us to apply \cite[Corollary 5.3]{guoyezhang-infinite} to derive the local error bound condition at $(x^*,\Psi(x^*))$, that is, there exist $\delta>0$ and $\kappa>0$ such that
 \[
{\rm dist}_\Lambda(x,y)\leq\kappa \left(\|g(x)_+\|+\|h(x)\|\right)\quad \forall (x,y)\in \cB_\delta(x^*,\Psi(x^*))\cap {\rm gph}\,\Psi.
\]
The exact penalization result then follows from applying Clarke's exact penalization principle \cite[Proposition 2.4.3]{c} to problem \eqref{auxi pnew}.
Unfortunately, the above argument does not work for the case where the abstract constraint set $\Omega$ is not equal to the whole space  $\Re^d$. Nevertheless, we have succeeded in deriving the following local error bound result under $D^*$-quasi-normality given in Definition \ref{D-quasi}.

\begin{lemma}\label{lema nerror}
Suppose that $D^*$-quasi-normality holds at $x^*\in \cF$. Then  the set
\begin{equation}\label{Lambda}
\Lambda:=\{(x,y)\in \Omega\times \Re : \Psi(x)-y=0,\ g(x)\leq0,\ h(x)=0\}
\end{equation}
admits a local error bound at $(x^*,y^*)$  with $y^*:=\Psi(x^*)$, that is, there exist $\delta>0$ and $\kappa>0$ such that
\begin{equation*}
{\rm dist}_\Lambda(x,y) \leq \kappa \left(|\Psi(x)-y|+ \|g(x)_+\| + \|h(x)\|\right) \  \forall (x,y)\in \cB_\delta(x^*,y^*)\cap (\Omega \times \Re).
\end{equation*}
\end{lemma}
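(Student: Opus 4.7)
The plan is to argue by contradiction using Ekeland's variational principle on a squared-Euclidean penalty, and push through Fermat's rule, the chain rule (Proposition~\ref{calculus}(iii)), and the outer semi-continuity results (Proposition~\ref{coderi conti}) to extract in the limit a nonzero multiplier pair that directly contradicts $D^*$-quasi-normality at $x^*$.

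Suppose the local error bound fails. Then there exist $\{(x^k,y^k)\}\subseteq\Omega\times\Re$ with $(x^k,y^k)\to(x^*,y^*)$ such that $\dist_\Lambda(x^k,y^k)>k\,\theta(x^k,y^k)>0$ for each $k$, where
$$\theta(x,y):=\sqrt{(\Psi(x)-y)^2+\sum_{i=1}^n g_i(x)_+^2+\sum_{j=1}^m h_j(x)^2}$$
is equivalent (up to a multiplicative constant) to the $\ell_1$-type penalty in the statement. Apply Ekeland's variational principle to $\theta$ on the closed set $\Omega\times\Re$ with parameters $\epsilon_k=\theta(x^k,y^k)$ and $\lambda_k=\dist_\Lambda(x^k,y^k)/2$; this yields $(\tilde x^k,\tilde y^k)\in\Omega\times\Re$ with $\|(\tilde x^k,\tilde y^k)-(x^k,y^k)\|\leq\lambda_k\to 0$, hence $(\tilde x^k,\tilde y^k)\to(x^*,y^*)$, and $(\tilde x^k,\tilde y^k)\notin\Lambda$ so that $\theta(\tilde x^k,\tilde y^k)>0$, with $(\tilde x^k,\tilde y^k)$ minimizing $\theta(\cdot)+\delta_k\|\cdot-(\tilde x^k,\tilde y^k)\|$ on $\Omega\times\Re$ for some $\delta_k=\epsilon_k/\lambda_k<2/k$.

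Fermat's rule together with the sum rule (applied to $\theta^2$, which has one non-Lipschitz summand $(\Psi-y)^2$ and several Lipschitz summands), the chain rule for the square root at the positive value $\theta(\tilde x^k,\tilde y^k)$, and Proposition~\ref{calculus}(iii) applied to each squared term then yields, upon division by $2\theta(\tilde x^k,\tilde y^k)$,
\begin{equation*}
0\in D^*\Psi(\tilde x^k)(\tilde\xi_k)+\sum_{i=1}^n\tilde\lambda_i^k\partial g_i(\tilde x^k)+\sum_{j=1}^m\partial(\tilde\mu_j^k h_j)(\tilde x^k)+\delta_k\cB+\cN_\Omega(\tilde x^k),
\end{equation*}
where the normalized multipliers
$\tilde\xi_k:=(\Psi(\tilde x^k)-\tilde y^k)/\theta(\tilde x^k,\tilde y^k)$,
$\tilde\lambda_i^k:=g_i(\tilde x^k)_+/\theta(\tilde x^k,\tilde y^k)\ge 0$,
$\tilde\mu_j^k:=h_j(\tilde x^k)/\theta(\tilde x^k,\tilde y^k)$
satisfy $\tilde\xi_k^2+\sum_i(\tilde\lambda_i^k)^2+\sum_j(\tilde\mu_j^k)^2=1$, while the $y$-component of the Fermat inclusion forces $|\tilde\xi_k|\le\delta_k\to 0$.

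Extracting a subsequence, $\tilde\lambda_i^k\to\lambda_i\ge 0$ and $\tilde\mu_j^k\to\mu_j$ with $(\lambda,\mu)\neq 0$. The squared-penalty structure ties each positive $\tilde\lambda_i^k$ to strict violation $g_i(\tilde x^k)>0$ and matches $\mathrm{sign}(\tilde\mu_j^k)=\mathrm{sign}(h_j(\tilde x^k))$, so $\lambda_i>0$ implies $g_i(\tilde x^k)>0$ along the tail, $\mu_j\neq 0$ implies $\mu_jh_j(\tilde x^k)>0$ along the tail, and $\lambda_ig_i(x^*)=0$ by continuity. Passing to the limit in the $x$-component inclusion via outer semi-continuity of $\partial g_i$, of the parametric subdifferential $(x,c)\mapsto\partial(ch_j)(x)$, and of $D^*\Psi$ (Proposition~\ref{coderi conti}) and $\cN_\Omega$, together with positive homogeneity of $D^*\Psi$ in its direction argument (to carry $\tilde\xi_k\to 0$ into $D^*\Psi(x^*)(0)$), yields
$$0\in D^*\Psi(x^*)(0)+\sum_{i=1}^n\lambda_i\partial g_i(x^*)+\sum_{j=1}^m\partial(\mu_jh_j)(x^*)+\cN_\Omega(x^*)$$
together with the sequence $\{\tilde x^k\}$ witnessing \eqref{eqn2}--\eqref{eqn3}, directly contradicting the $D^*$-quasi-normality hypothesis.

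I expect the hard part to be this final limit passage for the coderivative term: because $\Psi$ is only continuous, the element of $D^*\Psi(\tilde x^k)(\tilde\xi_k)$ in the decomposition may be unbounded as $\tilde\xi_k\to 0$, in which case an equally unbounded companion vector in $\cN_\Omega(\tilde x^k)$ cancels it and direct passage to the limit is not automatic. Handling this regime requires a secondary rescaling by the norm of the selected coderivative element, using the conic nature of both $D^*\Psi(\tilde x^k)(\cdot)$ and $\cN_\Omega(\tilde x^k)$, to extract a unit-length limit in $D^*\Psi(x^*)(0)$ via Proposition~\ref{coderi conti} and close the contradiction through the sequence-based formulation of $D^*$-quasi-normality.
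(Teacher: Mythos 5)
Your overall strategy (contradiction, Ekeland's variational principle on a residual function, Fermat's rule, limit passage) is a genuinely different route from the paper's, which instead represents $\Lambda$ as the zero set of the single function $\Xi(x,y)+\dist_\Omega(x)$ with $\Xi(x,y):=\max\{|\Psi(x)-y|,g_1(x),\dots,g_n(x),|h_1(x)|,\dots,|h_m(x)|\}$ and invokes the Wu--Ye subdifferential sufficient condition for error bounds \cite[Theorem 3.1]{error02} together with the max rule of Proposition~\ref{calculus}(iv). Unfortunately your version has two genuine gaps, both traceable to carrying the abstract constraint as an indicator function/normal cone $\cN_\Omega$ rather than as the globally Lipschitz penalty $\dist_\Omega$.

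First, the inclusion you write down after Fermat's rule is not justified: to split $\cN_{\Omega\times\Re}(\tilde x^k,\tilde y^k)$ off from $\partial\theta$ you need the sum rule for one Lipschitz and one non-Lipschitz summand, and here \emph{neither} $\theta$ (because of $\Psi$) nor $\delta_{\Omega\times\Re}$ is Lipschitz; the required qualification amounts to $-D^*\Psi(\tilde x^k)(0)\cap\cN_\Omega(\tilde x^k)=\{0\}$ at the Ekeland points, which is not implied by $D^*$-quasi-normality, since that condition only excludes \emph{nonzero} multiplier pairs $(\lambda,\mu)$ and says nothing about the interaction of $D^*\Psi(\cdot)(0)$ with $\cN_\Omega(\cdot)$ alone. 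Second --- and you correctly flag this as the hard part --- your proposed repair of the unboundedness of the selected $v^k\in D^*\Psi(\tilde x^k)(\tilde\xi_k)$ does not close the argument: rescaling the whole inclusion by $\|v^k\|\to\infty$ sends $\tilde\lambda^k$, $\tilde\mu^k$ and $\delta_k$ to $0$ and in the limit produces only $0\in D^*\Psi(x^*)(0)+\cN_\Omega(x^*)$ realized by a nonzero pair $(v^*,-v^*)$, i.e.\ the case $(\lambda,\mu)=0$, which $D^*$-quasi-normality does not forbid. The paper's proof avoids both problems simultaneously: its $\Omega$-term is $\partial\dist_\Omega(x^k)$, which lies in the unit ball, so the sum rule applies (one summand is Lipschitz) and, since every other term in the inclusion is bounded while the total tends to $0$, the coderivative element is forced to be bounded a priori; Proposition~\ref{coderi conti} then applies directly and the limiting multipliers remain nonzero. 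To rescue your route you would need the same device --- replace the hard constraint $(x,y)\in\Omega\times\Re$ by a Lipschitz penalty $\dist_\Omega(x)$ in the Ekeland objective, or otherwise establish boundedness of $v^k$ --- rather than a secondary rescaling.
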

\begin{proof}
First we observe that $\Lambda$ defined in \eqref{Lambda} can be rewritten as
\[
\Lambda =\{(x,y):  \Xi(x,y)+{\rm dist}_\Omega(x)=0\},
\]
where
$$
\Xi(x,y):=\max\left\{H(x,y),g_1(x), \ldots, g_n(x),|h_1(x)|,\ldots, |h_m(x)|\right\}
$$
with $H(x,y):=|G(x,y)|$ and $G(x,y):=\Psi(x)-y$. Then in order to obtain the desired result, by \cite[Theorem 3.1]{error02}, it suffices to show that there exist $\tilde{\delta}>0$ and $\tilde{\kappa}>0$ such that
\begin{eqnarray}\label{thm error contry}
\|\pi\|\geq \tilde{\kappa} \quad  \forall \pi \in \partial (\Xi+{\rm dist}_\Omega)(x,y),\forall  (x,y)\in\cB_{\tilde{\delta}}(x^*,y^*)\cap (\Omega \times \Re) \backslash \Lambda.
\end{eqnarray}

We now make some preparations for subsequent analysis.  Since $|\cdot|$ is globally Lipschitz and $G$ is continuous, it follows from Proposition \ref{calculus}(iii) that for any $(x,y)$,
\begin{eqnarray}
\partial H(x,y) &\subseteq& \bigcup_{\xi\in \partial|G(x,y)|} D^*G(x,y)(\xi)
                \subseteq \bigcup_{\xi\in \partial|G(x,y)|}\left(\begin{array}{c} D^*\Psi(x)(\xi)\\-\xi\end{array}\right),\label{thm error abs1}\\
\partial^\infty H(x,y) &\subseteq& D^*G(x,y)(0) \subseteq \left(\begin{array}{c} D^*\Psi(x)(0)\\0\end{array}\right).\label{thm error abs2}
\end{eqnarray}
Since ${\rm dist}_\Omega(\cdot)$ is globally Lipschitz and $\Xi$ is continuous, by Proposition \ref{calculus}(i),   we have that for any $(x,y)$,
\begin{eqnarray}\label{thm error dist}
\partial (\Xi+{\rm dist}_\Omega)(x,y) \subseteq \partial\Xi(x,y) + \partial {\rm dist}_\Omega(x)\times \{0\}.
\end{eqnarray}
Since $g,h$ are both Lipschitz around $x^*$, by Proposition \ref{calculus}(iv), it follows that for any $(x,y)$ with $x$ sufficiently close to $x^*$, there exists $(\alpha,\beta,\gamma)\in{\cal M}(x,y)$ where
\begin{eqnarray*}
{\cal M}(x,y):= \left\{(\alpha,\beta,\gamma): \begin{array}{l}\alpha\geq0,\ \beta\geq0,\ \gamma\geq0,\ \alpha+\|\beta\|_1+\|\gamma\|_1=1\\[3pt] \alpha (H(x,y)-\Xi(x,y))=0\\[3pt] \beta_i(g_i(x)-\Xi(x,y))=0\quad i=1,\ldots,n \\[3pt] \gamma_j(|h_j(x)|-\Xi(x,y))=0\quad j=1,\ldots,m\end{array}\right\}
\end{eqnarray*}
such that
\begin{equation}\label{thm error max}
\begin{array}{l}\partial\Xi(x,y) \subseteq \\
\bigcup\limits_{(\alpha,\beta,\gamma)\in {\cal M}(x,y)}\left\{ \alpha \diamond \partial H(x,y) + \sum\limits_{i=1}^n \left(\begin{array}{c}\beta_i \partial g_i(x)\\0\end{array}\right) + \sum\limits_{j=1}^m \left(\begin{array}{c} \gamma_j \partial |h_j|(x)\\0\end{array}\right)\right\}.
\end{array}
\end{equation}

In the following, we prove \eqref{thm error contry} by contradiction. Assume to the contrary that there exist a sequence $\{(x^k,y^k)\}$ with $(x^k,y^k)\in \Omega\times\Re\backslash \Lambda$ converging to $(x^*,y^*)$  and $\pi^k\in \partial (\Xi+\dist_\Omega)(x^k,y^k)$ such that $\pi^k\to 0$.
Then it follows from \eqref{thm error dist}--\eqref{thm error max} that there exists $(\alpha^k,\beta^k,\gamma^k)\in{\cal M}(x^k,y^k)$ such that
\begin{equation}\label{thm error inclu}
\pi^k\in \alpha^k \diamond\partial H(x^k,y^k) + \sum_{i=1}^n \left(\begin{array}{c}\beta^k_i\partial g_i(x^k)\\0\end{array}\right) + \sum_{j=1}^m \left(\begin{array}{c}  \gamma^k_j\partial |h_j|(x^k)\\0\end{array}\right)+\left(\begin{array}{c} \partial {\rm dist}_\Omega(x^k)\\0\end{array}\right).
\end{equation}
Noting that $(x^k,y^k)\in \Omega\times\Re\backslash \Lambda$, we have that
\begin{equation}\label{thm infes}
\Xi(x^k,y^k)>0\quad \forall k.
\end{equation}
Since $(\alpha^k,\beta^k,\gamma^k)\in {\cal M}(x^k,y^k)$, it follows that $\alpha^k\geq0$, $\beta^k\geq0,\ \gamma^k\geq0$, and
\begin{eqnarray}
&& \alpha^k + \|\beta^k\|_1 + \|\gamma^k\|_1=1,\label{thm error normal}\\
&& \alpha^k(H(x^k,y^k)-\Xi(x^k,y^k))=0,\label{thm error cp1}\\
&&\beta^k_i(g_i(x^k)-\Xi(x^k,y^k))=0\ i=1,\ldots,n,\label{thm error cp2}\\
&&\gamma_j^k(|h_j(x^k)|-\Xi(x^k,y^k))=0\ j=1,\ldots,m.\label{thm error cp3}
\end{eqnarray}
Define
\[
\bar{\gamma}^k_j:={\rm sign}(h_j(x^k))\gamma^k_j,\quad {\rm where}\ {\rm sign}(0):=0
\]
Since it follows from \eqref{thm infes} and \eqref{thm error cp3} that $\gamma_j^k=0$ when $h_j(x^k)=0$, it is easy to see that $\|\gamma^k\|_1=\|\bar{\gamma}^k\|_1$. It then follows from \eqref{thm error normal} that
\begin{equation}\label{thm nerror normal}
\alpha^k + \|\beta^k\|_1 + \|\bar{\gamma}^k\|_1=1.
\end{equation}
Moreover, by Proposition \ref{calculus}(ii), we have that
\begin{equation}\label{thm nerror gamma}
\gamma^k_j\partial |h_j|(x^k) = \partial (\bar{\gamma}^k_jh_j)(x^k).
\end{equation}
We continue the proof by considering the two separate cases as follows.

Case (a): There exists a subsequence $\{\alpha^k\}_{k\in{\cal K}}$ with ${\cal K}\subseteq \mN$ such that $\alpha^k=0$ for any $k\in {\cal K}$. Then it follows from \eqref{thm error abs2}, \eqref{thm error inclu}, \eqref{thm nerror gamma}, and the definition of notation $\diamond$ that for any $k\in {\cal K}$,
\begin{equation}\label{thm nerror 1}
\pi^k  \in \left(\begin{array}{c} D^*\Psi(x^k)(0)\\0\end{array}\right)+ \sum_{i=1}^n \left(\begin{array}{c}\beta^k_i\partial g_i(x^k)\\0\end{array}\right) + \sum_{j=1}^m \left(\begin{array}{c}  \partial (\bar{\gamma}^k_jh_j)(x^k)\\0\end{array}\right)+\left(\begin{array}{c} \partial {\rm dist}_\Omega(x^k)\\0\end{array}\right).
\end{equation}
In this case, it follows from \eqref{thm nerror normal} that $\|\beta^k\|_1+\|\bar{\gamma}^k\|_1=1$. Thus, there must exist subsequences $\{\beta^k\}_{k\in \cK_1}$ and $\{\bar{\gamma}^k\}_{k\in \cK_1}$ with $\cK_1\subseteq \cK$ such that as $\cK_1 \ni k\to \infty$,
\begin{equation}\label{thm error mul}
\beta^k \to \beta^*\geq0,\ \bar{\gamma}^k\to\gamma^*\quad {\rm with}\quad \|\beta^*\|_1+ \|\gamma^*\|_1=1.
\end{equation}
Taking limits on both sides of \eqref{thm nerror 1}, it then follows from \eqref{thm error mul}, Proposition \ref{coderi conti}, and the local boundedness of the limiting subdifferential of local Lipschitz functions that
\begin{equation}\label{thm nerror 2}
0 \in D^*\Psi(x^*)(0) + \sum_{i=1}^n \beta^*_i\partial g_i(x^*) + \sum_{j=1}^m  \partial (\gamma^*_jh_j)(x^*) + \partial {\rm dist}_\Omega(x^*).
\end{equation}
If $g_i(x^*)<0$, then $g_i(x^k)<0$ for any $k$ sufficiently large. Thus by \eqref{thm infes} and \eqref{thm error cp2}, $\beta_i^k=0$ for any $k$ sufficiently large. This together with \eqref{thm error mul} implies that $\beta^*_i=0$. In conclusion, we have
\begin{eqnarray}\label{thm nerror 3}
\beta^*_i\geq0,\ \beta^*_ig_i(x^*)=0\quad i=1,\ldots,n.
\end{eqnarray}
Moreover, if $\beta_i^*>0$, then by \eqref{thm error mul}, we have $\beta^k_i>0$ for any $k\in \cK_1$ sufficiently large. This and \eqref{thm error cp2} imply that $g_i(x^k)=\Xi(x^k,y^k)>0$. If $\gamma_j^*\neq0$, then by \eqref{thm error mul}, we have  $\gamma_j^*\bar{\gamma}^k_j>0$ for any $k\in \cK_1$ sufficiently large. Thus, it follows from the definition of  $\bar{\gamma}^k_j$ and the relation $\gamma^k_j\geq0$ that $\gamma_j^*h_j(x^k)>0$ for any $k\in \cK_1$ sufficiently large. Thus, we have
\[
\beta^*_i>0 \Longrightarrow g_i(x^k)>0,\quad \gamma^*_j\neq0 \Longrightarrow \gamma^*_j h_j(x^k)>0,
\]
which together with \eqref{thm error mul}--\eqref{thm nerror 3} contradicts $D^*$-quasi-normality at $x^*$ by using the relation $\partial {\rm dist}_\Omega(x^*)\subseteq \cN_\Omega(x^*)$.

Case (b): There exists a subsequence $\{\alpha^k\}_{k\in{\cal K}}$ with ${\cal K}\subseteq{\mN}$ such that $\alpha^k>0$ for any $k\in {\cal K}$. In this case, it then follows from \eqref{thm error abs1}, \eqref{thm error inclu}, \eqref{thm nerror gamma}, and the definition of notation $\diamond$ that for any $k\in {\cal K}$, there exists $\xi^k\in \partial|G(x^k,y^k)|$ such that
\begin{equation}\label{thm nerror 4}
\begin{array}{l}
\pi^k  \in \left(\begin{array}{c} D^*\Psi(x^k)(\alpha^k\xi^k)\\[2pt]-\alpha^k\xi^k\end{array}\right) + \sum\limits_{i=1}^n \left(\begin{array}{c}\beta^k_i\partial g_i(x^k)\\[2pt]0\end{array}\right) \\[2pt]
\qquad \qquad\qquad\qquad\quad\qquad\ + \sum\limits_{j=1}^m \left(\begin{array}{c}  \partial (\bar{\gamma}^k_jh_j)(x^k)\\[2pt]0\end{array}\right)+\left(\begin{array}{c}\partial {\rm dist}_\Omega(x^k)\\ 0\end{array}\right).
\end{array}
\end{equation}
Since $\alpha^k>0$, it follows from \eqref{thm infes} and \eqref{thm error cp1} that $H(x^k,y^k)=\Xi(x^k,y^k)>0$. Thus, by direct calculation, we have that
\begin{eqnarray}
\left\{\begin{array}{ll}\xi^k=1 &{\rm if}\ \Psi(x^k)-y^k>0,\\[2pt] \xi^k=-1 &{\rm otherwise.} \end{array}\right.
\end{eqnarray}
It then follows that $|\bar{\alpha}^k|=\alpha^k$ where $\bar{\alpha}^k:=\alpha^k\xi^k$. Thus by \eqref{thm nerror normal}, it follows that
\begin{equation*}
|\bar{\alpha}^k| + \|\beta^k\|_1 + \|\bar{\gamma}^k\|_1=1.
\end{equation*}
Without loss of generality, we assume that as ${\cal K}\ni k\to\infty$,
\begin{equation}\label{thm nerror 5}
\bar{\alpha}^k\to \alpha^*, \ \beta^k \to \beta^*,\ \bar{\gamma}^k\to\gamma^* \quad {\rm with}\quad |\alpha^*| + \|\beta^*\|_1 + \|\gamma^*\|_1=1.
\end{equation}
It then follows from \eqref{thm nerror 4} and the relation $\pi^k\to 0$ that $\bar{\alpha}^k\to0$ as ${\cal K}\ni k\to\infty$.  Thus by \eqref{thm nerror 5}, we have that
\[
\alpha^*=0,\ \|\beta^*\|_1 + \|\gamma^*\|_1=1.
\]
Taking limits on both sides of \eqref{thm nerror 4}, it then follows from \eqref{thm nerror 5}, Proposition \ref{coderi conti}, and the local boundedness of the limiting subdifferential of local Lipschitz functions that
\begin{equation*}
0\in D^*\Psi(x^*)(0) +\sum_{i=1}^n \beta_i^* \partial g_i(x^*) +\sum_{j=1}^m \partial (\gamma_j^* h_j)(x^*) + \partial {\rm dist}_\Omega(x^*).
\end{equation*}
The rest of the proof for case (b) is similar to that for case (a).

Therefore, there exist $\tilde{\delta}>0$ and $\tilde{\kappa}>0$ such that \eqref{thm error contry} holds and thus by \cite[Theorem 3.1]{error02}, we obtain the desired result immediately.  \qquad $\square$
\end{proof}

We are now ready to give the exact penalization result for problem (\ref{newp}).

\begin{theorem}\label{1thm exact}
Let $x^*$ be a local minimizer of problem (\ref{newp}).  If $D^*$-quasi-normality holds at $x^*$,
then there exists $\rho_0>0$ such that for any $\rho\geq\rho_0$, $x^*$ is also a local minimizer of the exact penalization problem
\begin{eqnarray*}
\min_{x\in \Omega}  && f(x)+\Psi(x)+ \rho \left(\|g(x)_+\| + \|h(x)\|\right).
\end{eqnarray*}
\end{theorem}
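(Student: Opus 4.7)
The plan is to reduce the non-Lipschitz problem (\ref{newp}) to the auxiliary problem (\ref{auxi pnew}) whose objective $f(x)+y$ is locally Lipschitz, and then combine the local error bound from Lemma \ref{lema nerror} with Clarke's exact penalization principle. First I would observe that if $x^*$ is a local minimizer of (\ref{newp}), then $(x^*,y^*)$ with $y^*:=\Psi(x^*)$ is a local minimizer of (\ref{auxi pnew}); indeed, any feasible $(x,y)$ of (\ref{auxi pnew}) near $(x^*,y^*)$ forces $y=\Psi(x)$, so minimizing $f(x)+y$ over feasible points of (\ref{auxi pnew}) is equivalent, locally, to minimizing $f(x)+\Psi(x)$ over feasible $x$ of (\ref{newp}).

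Next I would apply Lemma \ref{lema nerror}: since $D^*$-quasi-normality holds at $x^*$, the set
\[
\Lambda=\{(x,y)\in \Omega\times\Re:\Psi(x)-y=0,\ g(x)\leq0,\ h(x)=0\}
\]
admits a local error bound at $(x^*,y^*)$, i.e., there exist $\delta_1,\kappa>0$ such that
\[
\dist_\Lambda(x,y)\leq \kappa\bigl(|\Psi(x)-y|+\|g(x)_+\|+\|h(x)\|\bigr)
\]
for all $(x,y)\in \cB_{\delta_1}(x^*,y^*)\cap(\Omega\times\Re)$. Since the objective $f(x)+y$ of (\ref{auxi pnew}) is locally Lipschitz around $(x^*,y^*)$ with some Lipschitz constant $L$, Clarke's exact penalization principle \cite[Proposition 2.4.3]{c} yields a $\delta_2>0$ such that for any $\rho\ge \kappa L$ and any $(x,y)\in\cB_{\delta_2}(x^*,y^*)\cap(\Omega\times\Re)$,
\[
f(x^*)+y^*\leq f(x)+y+\rho\bigl(|\Psi(x)-y|+\|g(x)_+\|+\|h(x)\|\bigr).
\]

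Finally I would specialize this inequality by setting $y=\Psi(x)$. By continuity of $\Psi$, there exists $\delta_0>0$ such that $(x,\Psi(x))\in\cB_{\delta_2}(x^*,y^*)$ for every $x\in\cB_{\delta_0}(x^*)$. Substituting $y=\Psi(x)$ makes the term $|\Psi(x)-y|$ vanish and turns $f(x)+y$ into $f(x)+\Psi(x)$, giving
\[
f(x^*)+\Psi(x^*)\leq f(x)+\Psi(x)+\rho\bigl(\|g(x)_+\|+\|h(x)\|\bigr)
\]
for all $x\in\cB_{\delta_0}(x^*)\cap\Omega$, which is exactly the desired exact penalization with $\rho_0:=\kappa L$.

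I do not anticipate a major obstacle, since all the heavy lifting is already packaged into Lemma \ref{lema nerror} (where $D^*$-quasi-normality was used) and Clarke's principle. The only subtlety worth watching is ensuring that the neighborhoods in $(x,y)$ and in $x$ are matched via the continuity of $\Psi$, and that the auxiliary minimizer property is valid locally, both of which are routine.
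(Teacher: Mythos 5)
Your proposal is correct and follows essentially the same route as the paper's proof: pass to the auxiliary Lipschitz problem in $(x,y)$, invoke Clarke's exact penalization principle together with the local error bound for $\Lambda$ from Lemma \ref{lema nerror}, and then substitute $y=\Psi(x)$ using the continuity of $\Psi$ to match neighborhoods. The only difference is cosmetic (you fold the error bound into the penalty constant before stating the Clarke inequality, whereas the paper chains the two inequalities), so there is nothing further to add.
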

\begin{proof}
By the local optimality of $x^*$, it is easy to see that $(x^*,y^*)$ with $y^*:=\Psi(x^*)$ is a local minimizer of the following auxiliary problem:
\begin{eqnarray*}
\begin{array}{rl}
\min        & f(x)+y\\[4pt]
{\rm s.t.} & (x,y)\in \Lambda,
\end{array}
\end{eqnarray*}
where $\Lambda$ is defined in Lemma \ref{lema nerror}. Denote by $\ell$  the Lipschitz constant of the objective function $f(x)+y$ around $(x^*,y^*)$. By Clarke's exact penalization principle \cite[Proposition 2.4.3]{c},  there exists $\delta_1>0$ such that
\begin{equation*}
f(x^*) + y^* \leq f(x) +y + \ell {\rm dist}_\Lambda(x,y)\quad \forall (x,y)\in \cB_\delta(x^*,y^*).
\end{equation*}
Then it follows from Lemma \ref{lema nerror} that there exist $\delta_2\in(0,\delta_1)$ and $\kappa>0$ such that for all $(x,y)\in \cB_{\delta_2}(x^*,y^*)\cap (\Omega\times \Re)$,
\begin{eqnarray}\label{thm exact11}
f(x^*) + y^* &\leq& f(x) +y + \ell {\rm dist}_\Lambda(x,y)\nonumber\\
             &\leq& f(x) +y + \kappa \ell\left(|\Psi(x)-y|+ \|g(x)_+\| + \|h(x)\|\right).
\end{eqnarray}
By the continuity of $\Psi$, we may choose $\delta\in (0,\delta_2)$ such that $(x,\Psi(x))\in \cB_{\delta_2}(x^*,y^*)$ for any $x\in \cB_\delta(x^*)$. Then the desired result follows immediately from \eqref{thm exact11} by letting $y=\Psi(x)$ and $\rho_0:=\kappa \ell$. \qquad $\square$
\end{proof}

\begin{acknowledgements}
We thank the referees for their helpful suggestions and comments that have helped us to  improve the presentation of the paper. We would also like to thank Jim Burke for a discussion on the topic of this research.
\end{acknowledgements}



\end{document}